\documentclass[generic, preprint]{imsart}
\pdfoutput=1
\RequirePackage[OT1]{fontenc}
\RequirePackage{amsthm,amsmath}
\RequirePackage[numbers]{natbib}
\RequirePackage[colorlinks,citecolor=blue,urlcolor=blue]{hyperref}

% settings
%\pubyear{submission}
%\volume{?}
%\issue{?}
% \firstpage{1}
% \lastpage{24}

\startlocaldefs
\renewcommand{\cite}{\citet}
\bibliographystyle{abbrvnat}
\numberwithin{equation}{section}
\theoremstyle{plain}

\usepackage{jan-rudi-abrev-package}
\newtheoremstyle{mysc}
  {3pt}
  {3pt}
  {\it}      
  {}
  {\sc}
  {.}
  {.5em}
  {}
\newtheoremstyle{myex}
  {10pt}
  {10pt}
  {\rm}
  {}
  {\sc}
  {.}
  {.5em}
  {}
\theoremstyle{mysc}\newtheorem{prop}{Proposition}[section]
\theoremstyle{mysc}\newtheorem{assumption}{Assumption}[section]
\theoremstyle{mysc}\newtheorem{coro}[prop]{Corollary}
\theoremstyle{mysc}\newtheorem{theo}[prop]{Theorem}
\theoremstyle{mysc}
\theoremstyle{mysc}\newtheorem{lem}[prop]{Lemma}
\theoremstyle{myex}\newtheorem{rem}{Remark}[section]
\theoremstyle{myex}
\theoremstyle{myex}
\numberwithin{equation}{section} 

\endlocaldefs

\begin{document}

\begin{frontmatter}
\title{On rate optimal local estimation in functional linear regression
%\thanksref{T1}
}
\runtitle{On rate optimal local estimation in FLR}
%\thankstext{T1}{Footnote to the title with the `thankstext' command.}

\begin{aug}
\author{\fnms{Jan} \snm{Johannes}%\thanksref{t1,t2}
\ead[label=e1]{jan.johannes@uclouvain.be}}
\and
\author{\fnms{Rudolf} \snm{Schenk}%\thanksref{t3}
\ead[label=e2]{rudolf.schenk@uclouvain.be}}

\address{
Institut de statistique, biostatistique et sciences actuarielles (ISBA),\\ Voie du Roman Pays 20, B-1348 Louvain-la-Neuve,
    Belgium.\\
\printead{e1,e2}}

% \author{\fnms{Third} \snm{Author}
% \ead[label=e3]{third@somewhere.com}
% \ead[label=u1,url]{www.foo.com}}
% 
% \address{Address of the Third author\\
% usually few lines long\\
% usually few lines long\\
% \printead{e3}\\
% \printead{u1}}

%\thankstext{t1}{Some comment}
%\thankstext{t2}{First supporter of the project}
%\thankstext{t3}{Second supporter of the project}
\runauthor{J. Johannes and R. Schenk}

\affiliation{Universit\'{e} catholique de Louvain}

\end{aug}

\begin{abstract}
We consider the estimation of the value  of a 
linear functional of the slope parameter in  functional linear regression,
where scalar responses  are modeled in dependence of random functions. 
The theory in this paper  covers 
in particular point-wise estimation as well as the estimation of weighted averages of the slope parameter.
We propose a  plug-in estimator which is based on a dimension reduction technique and additional thresholding. 
It is shown that this estimator is consistent under mild assumptions. 
We derive a lower bound  for the maximal  mean squared error of any estimator over a certain ellipsoid of slope parameters
and a certain class of  covariance operators associated with the regressor.
It is shown that 
 the proposed estimator attains this lower bound up to a constant and hence it  is  minimax optimal.
Our results are
appropriate to discuss a
wide range of possible regressors, slope parameters and functionals. They are illustrated by considering  the point-wise estimation of
 the slope parameter or its derivatives and its average value over a given interval.

\end{abstract}

\begin{keyword}[class=AMS]
\kwd[Primary ]{62J05}
\kwd[; secondary ]{62G05}
\kwd{62J20}
\end{keyword}

\begin{keyword}
\kwd{Linear functional}
\kwd{ Linear Galerkin projection}
\kwd{ Minimax-theory}
\kwd{Point-wise estimation}
\kwd{ Sobolev space}
\kwd{Thresholding}
\end{keyword}
\tableofcontents
\end{frontmatter}

\section{Introduction}
\label{sec:intro}
 A common problem in functional regression is to investigate the dependence of a real random variable $Y$ on the variation of an explanatory random function $X$. It is usually assumed 
that  the regressor $X$ takes its  values in a separable Hilbert space 
$\mmH$
which is endowed with  an inner product $
\Hskalar
$ and its induced  norm $\Hnorm$. 
For convenience,  the regressor $X$ is often supposed to be centered in the  sense that for all $\func \in \mmH$ the real valued random variable $\HskalarV{X,f}$  
 has mean zero.
In this paper, the   
dependence of $Y$ on $X$ is supposed to be linear, that is
\begin{equation}\label{intro:e1}Y=\HskalarV{\sol,X} +\sigma\epsilon,\quad\sigma>0,
\end{equation}
with an unknown slope parameter $\sol\in
 \mmH
$  and a centered and standardized error term $\epsilon$. 
We focus on the estimation of the value  of a known linear functional of the slope $\sol$, which we denote by $\ell(\sol)
$.
The non-parametric estimation of the value of a linear functional from Gaussian white
noise observations is  subject of considerable literature (in case of direct observations see \cite{Speckman1979}, \cite{Li1982} or \cite{IbragimovHasminskii1984}, while in case
of indirect observations we refer to \cite{DonohoLow1992}, \cite{Donoho1994} or \cite{GoldPere2000} and references therein).
In the  literature, the most studied examples for estimating linear functionals  are point-wise estimation of $\sol$
and the estimation  of (possibly weighted) averages over a subinterval of its domain. These examples are particular cases of our general setting.
The objective of this paper is to establish a minimax theory for the non-parametric estimation of the value of a linear functional of the slope parameter $\sol$ in the functional linear model as
considered  in \eqref{intro:e1}, which    in general does not
lead to Gaussian white noise observations. 
 For this purpose we use a  plug-in estimator $\hell_m:=\ell(\hsol_m) $
based on an estimator $\hsol_m$ of the slope parameter that has been  proposed
by \cite{CardotJohannes2008} and  is inspired  by  the  linear Galerkin approach
coming from the inverse problem community  (c.f.
\cite{EfromovichKoltchinskii2001} or \cite{HoffmannReiss04}). 
 In recent years, the non-parametric estimation of the slope function $\sol$ from an independent and identically distributed (i.i.d.) sample of $(Y,X)$ has been of growing interest in the literature. For example, \cite{Bosq2000},
\cite{CardotMasSarda2007} or \cite{MullerStadtmuller2005} 
analyze a functional principal components regression, while a penalized least squares approach combined with projection
onto some basis (such as splines) is studied in \cite{RamsayDalzell1991}, \cite{EilersMarx1996}, \cite{CardotFerratySarda2003}, \cite{HallHorowitz2007} or
  \cite{CrambesKneipSarda2007}. 
All the proposed estimators of $\sol$  have in common that they achieve
under reasonable
assumptions  only  very poor rates of convergence. In other words, even
relatively large sample sizes may not be much of a help for  estimating the
slope parameter accurately as a whole.
The reason for these poor convergence rates is intrinsic to the considered model
as it leads in a natural way to an ill-posed inverse problem. To be more
precise, as considered for example 
in \cite{Bosq2000}, \cite{CardotFerratySarda2003} or
\cite{CardotMasSarda2007}, we suppose that the regressor $X$ has a finite second
moment, i.e., $\Ex\HnormV{X}^2<\infty$, and that $X$ is uncorrelated to the
random error $\epsilon$ in the sense that $\Ex{[\epsilon\HskalarV{X,f}]}=0$ for
all $f\in\mmH$. Multiplying both sides in (\ref{intro:e1}) by $\HskalarV{X,f}$
and taking the expectation leads to the continuous equivalent of the normal
equation in a
classical multivariate linear model. That is,  we have for all $\func \in \mmH$
\begin{equation}\label{intro:e2:2}\HskalarV{\gf,\func}:=\Ex[Y\HskalarV{X,\func}]
=\Ex[\HskalarV{\sol,X}\HskalarV{X,\func}]=:\HskalarV{\op \sol,\func},
\end{equation}
where $\gf$ belongs to $\mmH$ and $\op$ denotes the covariance operator associated with the random function $X$.
 In what follows we always assume that there exists a unique  solution $\sol\in \mmH$ of equation (\ref{intro:e2:2}), i.e., that $\op$ is strictly positive and that $\gf$ belongs to its range  (for a detailed discussion we refer to \cite{CardotFerratySarda2003}). 
Obviously, these conditions ensure as well that the value of a linear functional of $\sol$ is identified.
 Since the estimation of $\sol$ involves the
inversion of the covariance operator $\op$  it is called an inverse problem. Moreover, due to the finite second moment of the regressor $X$, the associated covariance
operator $\op$ is  nuclear, i.e., its trace is finite. Consequently, unlike in a multivariate  linear model,  a continuous generalized inverse of $\op$ does not exist as long as the range of $\op$ is
an infinite dimensional subspace of $\mmH$.  Therefore, the  reconstruction of
$\sol$ is ill-posed 
(with the additional difficulty that $\op$ is unknown and has to be estimated). 
As usual in the context of ill-posed inverse problems we impose additional conditions on the unknown slope parameter
$\sol$ and the covariance operator $\op$ which will be expressed in the form $\sol\in\cF$ and $\op\in \opclass$, for suitably chosen classes $\cF\subseteq \mmH$ and $\opclass$. 
The class $\cF$  reflects prior information on the solution $\sol$, e.g., its level of smoothness, and will be constructed flexibly enough to characterize, in particular,  differentiable  or analytic functions.
The class $\opclass$ links the mapping properties of the operator $\op$ to the
regularity conditions imposed on the slope function $\sol$. 
Typically, the assumption $\op\in\cG$  results in conditions on the decay of the
eigenvalues of the operator $\op$. The construction of the class $\cG$ allows us to discuss both a polynomial and
exponential decay of those eigenvalues.
It is interesting to note that  \cite{CaiHall2006} also consider the estimation of a linear functional in functional linear regression.
However, their results are restricted to differentiable slope parameters $\sol$ and polynomially decreasing eigenvalues of the operator $\op$. Moreover, the restrictions imposed on the linear  functional $\frep$   by \cite{CaiHall2006} implicitly exclude the particularly interesting case of point-wise estimation.\\
 We shall assess the accuracy of the proposed plug-in estimator $ \hell_m$
of the value $\ell(\sol)$
 by its maximal mean squared error over the classes $\cF$ and $\opclass$, that is,
\begin{equation} \label{eq:def:risk}
 \risk[\hell_m, \cF,\opclass]:=\sup_{\sol\in\cF}\sup_{\op\in\opclass}
\Ex|\widehat \ell_m-\ell(\sol)|^2.
\end{equation}
Below we derive   a lower  bound for  $\risk[\tilde \ell, \cF,\opclass]$
for all  estimators $\tilde\ell$ 
 and show that it provides up to a constant $C>0$ also an upper bound for the maximal risk over $\cF$ and $\opclass$ of the estimator $\hell_m$, i.e.,
\begin{equation*}
\risk[\hell_m, \cF,\opclass]\leq C\cdot\inf_{\tilde\ell}\risk[\tilde\ell, \cF,\opclass],
\end{equation*}  
 where the infimum is taken over all estimators of $\ell(\sol)$.
We thereby prove the minimax optimality of the estimator $\hell_m$. Our results are
appropriate to discuss a
wide range of possible regressors, slope parameters and functionals.
Moreover they  yield in a natural way 
uniform bounds if the functional  varies over a certain subset of the dual space.
The paper is organized in the following way:
in Section 
\ref{sec:mandn}
 we 
develop the plug-in estimator
 and
introduce our basic assumptions. In particular we define and illustrate the classes $\cF$ and $\cG$. Moreover, we embed the cases of point-wise and local average estimation in this general framework and present the resulting plug-in estimators. In this situation the classes  $\cF$ and $\cG$ cover the often considered cases of Sobolev
 ellipsoids  
and finitely or infinitely 
smoothing  covariance operators.
We provide, in Section \ref{sec:tmp},   sufficient conditions for  consistency of the proposed plug-in estimator 
 and then show its minimax-optimality.
More precisely, we derive a lower bound for the maximal risk over the classes $\cF$ and $\cG$  based on
 an i.i.d.\ sample obeying the functional linear model \eqref{intro:e1}. 
We show that the proposed plug-in estimator 
 attains the lower bound  up to a constant for a wide range of classes $\cF$ and $\cG$. These results are used to discuss the point-wise estimation of
 the slope parameter or its derivatives  and its average value over a given interval. 
The proofs can be found in the appendix.
\section{Methodology and notations}\label{sec:mandn}
\subsection{Thresholding projection estimator}\label{sec:tpe}
Following  \cite{CardotJohannes2008} we construct an estimator of the  unknown
slope function $\sol$ using a linear Galerkin approach. The estimation of $\sol$ is based on  a dimension reduction together with an additional thresholding, which we elaborate in the following. Let us  specify 
an arbitrary orthonormal basis $\{\bas_j\}_{j=1}^\infty$ of $\mmH$.
 We require in the following that the slope function $\sol$ belongs to a function class $\cF$ containing $\{\bas_j\}_{j= 1}^{\infty}$ and, moreover that $\cF$
is included in the domain  of the linear functional $\ell$. For technical reasons and without loss of generality we assume  that $\ell(\bas_1)\neq0$ which can always be ensured by reordering, except for the trivial case $\ell\equiv 0$.
With respect to this basis we consider
for $f\in\mmH$ the expansion $f=\sum_{j=1}^\infty\fouv{f}_j\bas_j$, with $
\fouv{f}_j:=\HskalarV{f,\bas_j}$, for $j\geq 1$. The unknown solution
$\sol\in \mmH$ is hence uniquely determined by its coefficients
$(\fouv{\sol}_j)_{j\geq 1}$.
Given an  integer dimension parameter $m\geq 1$, we consider the subspace $\sspace_m$ 
spanned by the  functions $\{\bas_j\}_{j=1}^m$. We  recall that a Galerkin   solution $\sol_{m}\in\sspace_m$ of the operator equation \eqref{intro:e2:2} with respect to $\sspace_m$ satisfies
\begin{equation}\label{app:unknown:Galerkin:1:2}
\HnormV{\gf-\op\sol_m}\leqslant  \HnormV{\gf-\op\tilde\sol},\quad\forall \tilde \sol\in\sspace_m.
\end{equation}
Since $\op$ is strictly positive, the Galerkin solution exists in a  unique way.
Precisely,
 if we consider  the expansion $\sol_{m}=\sum_{j=1}^m \fouv{\sol_m}_j\bas_j$
 then $\sol_{m}$ is uniquely determined by the vector of coefficients $\fouv{\sol_{m}}_{\um}:= (\fouv{\sol_m}_1,\ldots,\fouv{\sol_m}_m)^t$. In this context, the restriction of $\op$ to an operator from $\sspace_m$ to itself can be identified with a matrix operating on 
$\R^m$.
This matrix is given by the entries $\HskalarV{\bas_j, \op\bas_l}$ for $1\leq j,l\leq m$ and will be denoted as $\foum{\op}_{\um}$, in slight abuse of notation. It is easy to verify that the Galerkin solution defined by \eqref{app:unknown:Galerkin:1:2} satisfies  $\foum{\op}_{\um}\fouv{\sol_m}_{\um}=\fouv{\gf}_{\um}$. Since $\op$ is strictly positive,
the matrix $\foum{\op}_{\um}$ is nonsingular for all $m\geq 1$, such that its inverse $\foum{\op}_{\um}^{-1}$ always exists. Therefore, the Galerkin solution $\sol_{m}$ is determined by 
\begin{equation}\label{eq:gal:th}\fouv{\sol_{m}}_{\um}= \foum{\op}_{\um}^{-1}\fouv{\gf}_{\um}. \end{equation}
 Here and subsequently, we denote by $\{(Y_i, X_i)\}_{i=1}^n$  an  
i.i.d.\ sample of $(Y,X)$ of size $n$ satisfying
\eqref{intro:e1}.  We observe that  $\fouv{\gf}_{\um}=\Ex
Y\fouv{X}_{\um}$ and $\foum{\op}_{\um}=\Ex \fouv{X}_{\um}\fouv{X}_{\um}^t$ and 
hence it 
is natural to consider the estimators
\begin{equation*}\label{com:e2}
\fouv{{\hgf}}_{\um}:=\frac{1}{n}\sum_{i=1}^n Y_{i}\,\fouv{X_i}_{\um}\quad\mbox{and}\quad \foum{\hop}_{\um}:=\frac{1}{n}\sum_{i=1}^n \fouv{X_{i}}_{\um}\fouv{X_i}_{\um}^t\end{equation*}
of 
 $\fouv{{\gf}}_{\um}$ and $\foum{\op}_{\um}$, respectively. The estimator of
$\fouv{\sol_m}_{\um}$ is derived from \eqref{eq:gal:th} by replacing  
$\fouv{{\gf}}_{\um}$ and $\foum{\op}_{\um}$ by their empirical counterparts.
However, the inversion of the empirical covariance matrix $\foum{\hop}_{\um}$
introduces an instability to the estimation procedure even if the matrix
$\foum{\op}_{\um}$ is well-conditioned. This instability issue  is treated  by
an additional thresholding step.
Let us denote by $ \mnormV{\foum{\hop}^{-1}_{\um}}$  the spectral norm of the matrix   $\foum{\hop}_{\um}^{-1}$,  
which equals its largest eigenvalue. Then, the estimator $\hsol_{m}^{\thresh}\in\sspace_m$ of $\sol$ is
determined by the vector of coefficients 
 \begin{equation}\label{eq:def:Galerkin:loesung}
  \fouv{\hsol_m^{\thresh}}_{\um}:=
 \left\{\begin{array}{lcl} 
 \foum{\hop}_{\um}^{-1} \fouv{\hgf}_{\um}, && \mbox{if $\foum{\hop}_{\um}$ is non-singular and }\mnormV{\foum{\hop}^{-1}_{\um}}\leq \thresh\ n,\\
 0,&&\mbox{otherwise}.
 \end{array}\right.
 \end{equation}
In order to estimate the value of the linear functional
$\ell(\sol)$ we consider the plug-in estimator 
 $\hell_{m}:=\ell(\hsol_{m}^{\thresh})$ and observe that $\ell(\hsol_m)
=(\ell(\bas_1),\dotsc,\ell(\bas_m))
 [{\hsol_m}]_{\um}=:[\frep]_{\um}^t[{\hsol_m}]_{\um}$ with the slight abuse of notations  
$[\frep]_{\um}:=([\frep]_j)_{1\leq j\leq m}$  and generic elements $[\frep]_j:=\frep(\bas_j)$. 
The estimator  obviously satisfies
\begin{equation}\label{gen:def:est}
 \widehat{\ell}_m=
 \left\{\begin{array}{lcl} 
 [\frep]_{\um}^t[\hop]_{\um}^{-1} [\hgf]_{\um}, && \mbox{if $[\hop]_{\um}$ is non-singular and }\mnormV{[\hop]^{-1}_{\um}}\leq \thresh\ n,\\
 0,&&\mbox{otherwise}.
 \end{array}\right.
 \end{equation}
This procedure raises the question how to choose the dimension parameter $m$, which depends on the sample size $n$.  It needs to tend to infinity as $n$ increases and we will discuss its optimal choice in Section \ref{sec:lower}.
\subsection{Basic model assumptions}\label{sec:bma}
Let us 
introduce the
 class $\cF$ which we determine by means of a weighted norm in $\mmH$.
Given the orthonormal basis $\{\bas_j\}_{j=1}^\infty$ and  a strictly positive sequence of  weights $(\weight_j)_{j\geq1}$, or $\weight$ for short,  we define for  $f\in\mmH$ the
weighted norm $\wnormV{f}{\weight}^2:=\sum_{j\geq 1}\weight_j\fouv{f}_j^2$.
Furthermore, we define  $\cF_\weight$ as the completion of $\mmH$ with 
respect to $\wnormV{\cdot}{\weight}$. Obviously,
for a non-decreasing sequence $\weight$  the class $\cF_\weight$ is a subspace of $\mmH$. 
In the illustrations of Section \ref{sec:illu} the order of the sequence $\bw$  directly reflects smoothness assumptions on the solution. If there exist an integer $p>0$
and  a constant $c>0$ such that $c^{-1} j^{2p}\leq \bw_j\leq c^{-1} j^{2p}$, or  $\bw_j\asymp j^{2p} $ for short, then this polynomial increase will corresponds to $p$-times differentiable functions. However,  the theory in this paper is not restricted to  polynomially increasing sequences $\bw$.  We also consider an exponential increase, i.e.,  $\bw_j\asymp\exp(j^{2p})$, which is known to specify analytic functions.
We will assume in the following, that there exist
a non-decreasing, unbounded sequence of  weights  $\bw$  with $\bw_1=1$ and a constant $\br>0$ such that the 
 solution $\sol$ belongs to the ellipsoid
 $\cF_{\bw}^\br := \big\{f\in \cF_\bw: \wnormV{f}{\bw}^2\leq \br\big\}$.
In order to guarantee that  $\cF_{\bw}^\br$ is contained in the domain of the linear functional $\frep$ and that $\frep(f)=\sum_{j\geq1} [\frep]_j{[f]}_j$ for all
$f \in\cF_{\bw}^\br$ with
$[\frep]_j=\frep(\bas_j)$, $j\geq1$,  it is sufficient that  $\sum_{j\geq1} [\frep]_j^2\bw_j^{-1}<\infty$. In what follows, we understand arithmetic operations on a sequence of real numbers $\weight$ component-wise, e.g., we write $1/\weight$ instead of $(1/\weight_j)_{j\geq 1}$.
As no confusion can be caused we define $ \wnormV{\frep}{1/\bw}^2 =\sum_{j\geq1} [\frep]_j^2\bw_j^{-1}$ and denote the set of all linear functions with $ \wnormV{\frep}{1/\bw}^2<\infty$ by  $\cFF_{1/\weight}$. 
We may emphasize
that we  neither impose that the sequence $[\frep]=([\frep]_j)_{j\geq1}$ tends to zero nor that it is square summable. However, if it is square summable then the entire of $\mmH$ is the domain of $\frep$. Moreover,
$[\frep]$ coincides with the sequence of generalized Fourier coefficients of the representer of $\frep$ given by Riesz's theorem.
 The assumption $\frep\in\cFF_{1/\bw}$ enables us in specific cases to deal with more demanding functionals, such as the estimation of the point-wise evaluation of the slope. 
As a byproduct, our theory allows us  to assess the performance of the estimation procedure of  $\ell(\sol)$ not only for a single $\frep\in\cFF_{1/\bw}$, 
but also  for  $\frep$ varying over an ellipsoid in $\cFF_{1/\bw}$. For this purpose we 
 suppose that there exists 
a non-negative sequence  $\hw$ with $\hw_1=1$ and
a constant $\hr>0$   such that $\frep$ belongs to the ellipsoid
  $\cFF_\hw^\hr:=\{\ell \in \cFF_{1/\bw}: \wnormV{\ell}{\hw}^2:= \sum_{j\geq1}\hw_j [\frep]_j^2\leq \hr  \}
$. Under the condition $\sup_{j\geq 1}\{1/(\hw_j\bw_j)\}<\infty$ the ellipsoid  $\cFF_\hw^{\hr}$ is clearly a subset of $\cFF_{1/\bw}$. In order to describe the mapping properties   of the covariance operator $\op$, stated in the form $\op\in\opclass$,  we introduce the set $\cN$ of all strictly positive  nuclear operators defined on $\mmH$.
We suppose that there exists  a constant $\Td\geqslant 1$ and a strictly positive, non-increasing sequence of weights $\Tw$ with $\Tw_1=1$ and $\sum_{j=1}^\infty\Tw_j<\infty$,  such that $\op$ belongs to  the class
\begin{equation*}\label{bm:link}
\cN_{\Tw}^{\Td}:=\Bigl\{ T\in\cN:\quad   d^{-2}\wnormV{f}{\Tw^2}^2\leqslant \HnormV{T f}^2\leqslant {\Td^2}\, \wnormV{f}{\Tw^2}^2,\quad \forall f \in \mmH\Bigr\}.
\end{equation*}
Note that for each  $T\in\cN$ the trace $\tr(T):=\sum_{j=1}^\infty \HskalarV{\bas_{j},T\bas_{j}}$ is finite. Hence, setting $\foum{T}_{j,j}:= \HskalarV{\bas_{j},T\bas_{j}}$, $j\geq1$,  the 
sequence  $(\foum{T}_{j,j})_{j\geqslant1}$ converges to zero. Moreover, for $T\in\cN_{\Tw}^\Td$ the decay of this sequence is characterized by $\Tw$  since   
 $d^{-1}\Tw_j\leq \foum{T}_{j,j}\leq d\Tw_j$ for all $j\geq 1$. Furthermore,
if $\lambda$ denotes its  sequence of eigenvalues then  $d^{-1} \Tw_j\leq \lambda_j\leq d \Tw_j$ holds true for all $j\geq1$.
 Let us summarize the conditions on 
the sequences $\bw$, $\hw$ and $\Tw$.
\begin{assumption}\label{ass:reg}
Let  $\bw$, $\hw$ and  $\Tw$ be strictly positive sequences of weights  such that   $\bw$ and $1/\Tw$  are
 non-decreasing. We suppose that the sequences satisfy $\bw_1= \hw_1=\Tw_1= 1$,  $\sup_{j\geq 1}\{1/(\hw_j\bw_j)\}<\infty$,  $\sum_{j=1}^\infty\Tw_j<\infty$ and that $\bw$ tends to infinity.
\end{assumption}
\newpage
\noindent{We illustrate the last assumption for typical choices of the
sequences $\bw$,  $\hw$ and $\Tw$:}
\begin{enumerate}
 \item[\textit{(ppp)}]
Consider $\bw_j\asymp|j|^{2p}$,   $\Tw_j\asymp|j|^{-2a}$
 and either
 \textit{(i)} $[\frep]^2_j\asymp |j|^{-2s}$,  or
\textit{(ii)}
$\hw_j\asymp |j|^{2s}$ then Assumption \ref{ass:reg} holds true if $p>0$, $a>1/2$,  and either \textit{(i)} $s>1/2-p$ or \textit{(ii)} $s>-p$.
\item[\textit{(pep)}] Consider
$\bw_j\asymp|j|^{2p}$, $\Tw_j  \asymp \exp(-|j|^{2a})$
 and either 
\textit{(i)} 
 $[\frep]_j^2\asymp |j|^{-2s}$ or
\textit{(ii)}
$\hw_j\asymp |j|^{2s}$ then Assumption \ref{ass:reg} holds true if  $p>0$, $a>0$ and   either 
\textit{(i)}  $s>1/2-p$ or \textit{(ii)}  $s>-p$. 
\item[\textit{(epp)}] 
Consider
$\bw_j\asymp \exp(|j|^{2p})$,
$\Tw_j\asymp|j|^{-2a}$ 
 and either
\textit{(i)} $[\frep]^2_j\asymp|j|^{-2s}$ or
\textit{(ii)} $\hw_j^2\asymp|j|^{2s}$
then Assumption \ref{ass:reg} holds true if $p>0$, $a>1/2$
and  $s\in\R$.
\item[\textit{(ppe)}] 
 Consider $\bw_j\asymp |j|^{2p}$, $\Tw_j\asymp |j|^{-2a}$
 and either
\textit{(i)} 
$[\frep]_j^2\asymp\exp(-|j|^{2s})$  or
\textit{(ii)}  $\hw_j\asymp\exp(|j|^{2s})$ then  
 Assumption \ref{ass:reg} holds true if  $p>0$, $a>1/2$
and  $s>0$.
\end{enumerate}
\begin{rem}
\cite{CaiHall2006}
  consider only  \textit{(i)} in the case \textit{(ppp)} 
 and suppose  a decay of the representing coefficients $ [\frep]$  of order
$(|j|^{-s})$ with $s>1/2$. 
 This condition  excludes, for example, point-wise
estimation which we will consider, together with the other four cases, below.
\hfill$\square$ \end{rem}
The only assumptions on the stochastic behavior of the error term $\epsilon$ and the regressor $X$  that we need in order to derive our mean squared error results concern their moments.  
We observe that for all $f\in\mmH$ the random variable $\HskalarV{f,X}$ has
mean zero and variance $\HskalarV{\op\func,\func}$ and we will impose moment
conditions on the standardized random variable $
\HskalarV{\op f,f}^{-1/2}\HskalarV{f,X}$.
 \begin{assumption}\label{ass:MOM} 
 There exist an  integer $k\geq 12 $ and a constant $\eta\geq 1$ such that  $\Ex\absV{\epsilon }^{4k}\leq \eta$
and that for all $f\in \mmH$ with $\HskalarV{\op f,f}=1$  it holds  $\Ex\absV{\HskalarV{f,X} }^{4k}\leq \eta$.
\end{assumption}
Note that any centered Gaussian random function $X$  with finite second moment satisfies Assumption \ref{ass:MOM}, since
for all $f\in\mmH$  with $\HskalarV{\op f,f}=1$  the corresponding random variable
$\HskalarV{f,X}$
 is 
standard normally distributed and consequently
  $\Ex\absV{\HskalarV{f,X}^{4k}}\leq(4k-1)\cdot(4k-3)\cdot\ldots\cdot 5\cdot 3\cdot 1$. 
\subsection{Point-wise and   local average estimation}\label{subs:pwlae}
Consider $\mmH=L^2[0,1]$ with its usual norm and inner product and  the trigonometric basis
\begin{equation*}\label{bm:def:trigon:1}
\psi_{1}:\equiv1, \;\psi_{2j}(s):=\sqrt{2}\cos(2\pi j s),\; \psi_{2j+1}(s):=\sqrt{2}\sin(2\pi j s),s\in[0,1],\; j\in\N.\end{equation*} 
Recall the typical choices of the sequences $ \bw,\hw$  and $\Tw$ as introduced above.
If  $\weight_j\asymp |j|^{2p}$ for a positive integer $p$, see  cases \textit{(ppp)},\textit{(pep)},\textit{(ppe)}, then  
 the subset $\cF_{\weight}$ coincides with the Sobolev space  of $p$-times differential periodic functions
(c.f. \cite{Neubauer1988,Neubauer88}). 
In the case \textit{(epp)}
  it is well-known that for 
$p>1$ every 
$f\in \cF_{\weight}$ is 
an analytic function (c.f. \cite{Kawata1972}).
On the other hand we consider two special cases describing a \lq\lq regular decay\rq\rq\ 
of the unknown eigenvalues of $\op$. Precisely,  we assume  a polynomial decay of $\Tw$ with $a>1/2$ 
in
the cases \textit{(ppp), (epp)} and \textit{(ppe)}.
Easy calculus 
shows that the covariance operator $\op\in\cN_\Tw^\Td$ acts for integer $a$  
    like integrating   $(2a)$-times and hence it is called 
{\it finitely smoothing} (c.f. \cite{Natterer84}). 
In the  case \textit{(pep)} we assume  an exponential decay of $\Tw$ and
it is 
 easily  seen that  the range of  $\op\in \cN_{\Tw}^{\Td}$ is a subset of $C^\infty[0,1]$, 
therefore  the operator
is called {\it infinitely smoothing} (c.f. \cite{Mair94}). 
\paragraph{Point-wise estimation}
By  \textit{evaluation   in a given point}
 $t_0 \in [0,1]$ we mean the linear functional $\frep_{t_0}$ mapping  $f$ to $f(t_0):=\ell_{t_0}(f)=\sum_{j=1}^\infty [f]_j\bas_j(t_0)$.
 In the following we shall assume that the point evaluation is well-defined on
 the set of slope parameters $\cF_{\bw}$ which is obviously implied by  $\sum_{j=1}^\infty[\ell_{t_0}]_j^2\bw_j^{-1}<\infty$. 
 Consequently, the condition  $\sum_{j\geq1}\bw_j^{-1}<\infty$  is sufficient  to guarantee that the point evaluation is well-defined on $\cF_\bw$.
 Obviously, in case  \textit{(pep)} or in other words  for exponentially increasing $\bw$,  this additional condition is automatically satisfied. However, a polynomial increase, as in the cases  \textit{(ppp)} and \textit{(ppe)},  
 requires the  assumption $p>1/2$. Roughly speaking, this means that the slope parameter has at least to be  continuous. In order to estimate the value
 $\sol (t_0)$ we consider the plug-in estimator 
 \begin{equation*}
 \hell_{t_0}^m
 =
 \left\{\begin{array}{lcl} 
[\ell_{t_0}]_{\um}^t[\hop]_{\um}^{-1} [\widehat{g}]_{\um}, && \mbox{if $[\hop]_{\um}$ is non-singular and }\mnormV{[\hop]^{-1}_{\um}}\leq  n,\\
 0,&&\mbox{otherwise},
 \end{array}\right.
 \end{equation*}
 with $[\ell_{t_0}]_{\um}=(\psi_1(t_0),\dotsc,\psi_m(t_0))^t$. Moreover, we observe that $\hell_{t_0}^m=\ell_{t_0}(\hsol_m)=\hsol_m(t_0)$
 for $\hsol_m^{\thresh}\in\sspace_{m}$ 
  as determined by  \eqref{eq:def:Galerkin:loesung}.

\paragraph{Local average estimation} 
 Next we are  interested in the average value   of $\sol $ on the interval $[0,b]$ for $b\in(0,1]$. If we denote the linear functional mapping $f$ to 
$b^{-1}\int_{0}^b f (t)dt$ by $\ell^b$, then it is easily seen that 
$[\ell^b]_1=1$, $[\ell^b]_{2j}=(\sqrt2\pi jb)^{-1}\sin(2\pi jb)$,
 $[\ell^b]_{2j+1}=(\sqrt2\pi j b)^{-1}\cos(2\pi jb)$  for $j\geq1$.  In this situation  the plug-in estimator $ \hell^b_{m}= b^{-1}\int_0^b\hsol_m(t)dt$ is written as
\begin{equation*}
\hell^{b}_m
=
\left\{\begin{array}{lcl} 
[\ell^b]_{\um}^t[{\hop}]_{\um}^{-1} [\widehat{g}]_{\um}, && \mbox{if $[{\hop}]_{\um}$ is non-singular and }\mnormV{[{\hop}]^{-1}_{\um}}\leq  n,\\
0,&&\mbox{otherwise}.
\end{array}\right.
\end{equation*}
Again, with  $\hsol_m^{\thresh}\in\sspace_{m}$ as
 determined by  \eqref{eq:def:Galerkin:loesung}, we observe that $ \hell^{b}_m= {\ell(\hsol_m)}=b^{-1}\int_0^b\hsol_m(t)dt$.\par
\newpage\section{Theoretical properties}\label{sec:tmp}
%\setcounter{chapter}{2}
%\setcounter{equation}{0} %-1
%\setcounter{section}{0}
%\setcounter{prop}{0}
%\setcounter{rem}{0}
%\noindent {\bf \arabic{chapter}. Theoretical main part.}
%\section{Optimal local estimation  in the general case.}
\subsection{Consistency under mild assumptions}\label{sec:consis}
In this section we provide sufficient conditions for the consistency of the
estimator ${\widehat\ell}_m$  defined in \eqref{gen:def:est} for all $\sol
\in \cF_{\bw}$ and $\frep \in \cFF_{1/\bw}$. Recall that this estimator is based
on the Galerkin solution
$\sol_{m}$.
The next assertion summarizes that 
 consistency can be ensured if
\begin{equation}
 \label{eq:suff:cond:Galerkin}
\wnormV{\sol-\sol_{m}}{\bw}=o(1)\quad \text{ as $m\to\infty$}
\end{equation}
  which is in general not
satisfied   without further assumptions, not even for $\bw \equiv 1$. 
\begin{prop}\label{res:gen:prop:cons} 
Let $\{(Y_i, X_i)\}_{i=1}^n$ be an i.i.d.\ sample of $(Y,X)$ satisfying
\eqref{intro:e1}.
Suppose that $\sol\in\cF_\bw$ and $\frep \in \cFF_{1/\bw}$, where the sequence
$\bw$  satisfies     Assumption \ref{ass:reg} and let 
 Assumption \ref{ass:MOM} hold true.
Consider the estimator ${\widehat\ell_m}$ 
with  dimension  $m=m(n)$ satisfying $1/m=o(1)$ and $m^3=O(n)$ as
$n\to\infty$
 and suppose 
that  in addition the following conditions are fulfilled
\begin{equation}\label{eq:bed:red}
                \mnormV{\foum{\op}^{-1}_{\um}}=o(n)\quad\text{ and
}\quad \fouv{\frep}_{\um}^t\foum{\op}_{\um}^{-1}\fouv{\frep}_{\um}
=o(n)\quad\text{
as $n\to\infty$}.
                \end{equation}
If condition \eqref{eq:suff:cond:Galerkin} holds true 
then  we have
 $\Ex|\widehat\ell_m-{\ell(\sol)}|^2=o(1)$ as  $n\to\infty$. 
  \end{prop}
If the operator $\op$ satisfies a link condition, i.e., $\op\in\cN_\Tw^\Td$,
then 
condition \eqref{eq:suff:cond:Galerkin} is 
automatically fulfilled which is expressed in the next assertion.
 \begin{coro}\label{res:gen:coro:cons}
Let the covariance operator  $\op$   be an element of  $\cN_\Tw^\Td$  with $\Td\geqslant1$ and let the sequence $\Tw$  satisfy Assumption  \ref{ass:reg}. 
 The conclusion of Proposition \ref{res:gen:prop:cons} still holds true without imposing condition \eqref{eq:suff:cond:Galerkin} provided \eqref{eq:bed:red} is substituted by 
\begin{equation}\label{eq:neu:3.3}
\frac{1}{n\Tw_m} =o(1),\quad\text{and}\quad \frac{1}{n}\sum_{j=1}^m \frac{\fouv{\frep}^2_j}{\Tw_j}=o(1)
\quad\text{ as  $n\to\infty$.} 
\end{equation}
\end{coro}
\begin{rem} Consider the case \textit{(ppp)} and suppose  that  $\sum_{j\geq1} [\frep]^2<\infty $. In this situation,
the assumption $m^{2a}=o(n)$ as $n\to\infty$ implies the additional condition
in Corollary \ref{res:gen:coro:cons}. Interestingly, in a
direct regression  model the condition
$m=o(n)$ as $n\to\infty$ is needed to ensure consistency, which would correspond to the case $a=1/2$. This, however,  cannot hold true for any $X$ with finite second moment. 
\null\hfill $\square$
\end{rem}
\subsection{The lower bound}\label{sec:lower}
In order to  obtain a lower bound for the minimax risk
$\inf_{\tilde\ell}\risk[\tilde\ell,\cF_\bw^\br,\cN_\Tw^\Td]$ defined in \eqref{eq:def:risk}   we assume in  addition
 that for all $\func \in \mmH$ the conditional distribution of
$\epsilon$
given $\HskalarV{X,\func}$ is Gaussian with mean zero and variance one, or
$\epsilon|\HskalarV{X,f}\sim \cN(0,1)$ for short. This assumption   is only used to simplify the calculation of
the distance  between distributions of the observations corresponding to different slope functions.
In order to formulate the lower bounds below let us define 
\begin{equation}
\label{eq:def:kstar:astar} 
\kstar:= \argmax_{m \geq1}\left\{\frac{\min\{\frac{\Tw_m}{\bw_m},n^{-1}\}}{
\max\{\frac{\Tw_m}{\bw_m},n^{-1}\}}\right\}, \quad
\astar:=\max\left\{ \frac{\Tw_\kstar}{\bw_\kstar},n^{-1}\right\}\quad\text{for all $n\geq 1$.}
\end{equation}
The lower bound needs the following assumption.
\begin{assumption}
 \label{ass:Kappa}
Let $\bw$ and $\Tw$ be sequences such that 
\begin{equation}\label{eq:def:kappa}
 0<\kappa:=\kappa(\Tw,\bw):=\inf_{n\geq 1}
\Bigl\{(\astar)^{-1}\min\Bigl\{\frac{\Tw_\kstar}{\bw_\kstar}, n^{-1}\Bigl\}\Bigl\}\, \leq 1.
\end{equation}
\end{assumption}
\noindent Now we are in the position to state the main result of this section.
\begin{theo}\label{res:lower}
Let $\{(Y_i, X_i)\}_{i=1}^n$ be an i.i.d.\ sample of $(Y,X)$ satisfying \eqref{intro:e1}.
Suppose that 
Assumptions \ref{ass:reg}, \ref{ass:MOM} and
 \ref{ass:Kappa} hold true
and assume
in addition that
$\epsilon|\HskalarV{X,f}\sim \cN(0,1)$ 
for all $f\in \mmH$.
 If $\kstar\in\N$ is  given by equation
\eqref{eq:def:kstar:astar} then 
we
have 
for all $\frep\in\cFF_{1/\bw}$
and all $n\geq 1$:  
\begin{equation*}  
\inf_{\tilde\ell}\risk[\tilde\ell,\cF_\bw^\br,\cN_\Tw^\Td]
\geqslant  
\frac{\kappa}{4}\,\min \left( \frac{\sigma^2}{2\Td},{\br}\right)
\max\left\{\astar\sum_{j=1}^\kstar
\frac{\fouv{\frep}_j^2}{\Tw_j},\sum_{j>\kstar}\frac{\fouv{\frep}_j^2}{\bw_j}
\right\} .
\end{equation*}
\end{theo}
\begin{rem}
Below we derive an upper 
bound for $\risk[\widehat\ell_{\kstar}, \cF_\bw^{\br},\cN_{\Tw}^{\Td} ]$  of the estimator $\widehat\ell_{\kstar}$  assuming that 
the error term $\epsilon$ and the regressor $X$  are uncorrelated. Obviously, in
this situation Theorem \ref{res:lower} provides a lower bound for any estimator
as long as 
Assumption  \ref{ass:MOM}
 does not exclude
a Gaussian error.  
It is worth to note that  
the lower bound tends to zero with  parametric rate $n^{-1}$   if  and only if  
$\sum_{j=1}^\infty [\frep]^2_j{\Tw_j}^{-1}<\infty$, independently of the
class $ 
\cF_{\bw}^{\br}
$ of slope parameters.\hfill$\square$\end{rem} A straightforward consequence of Theorem \ref{res:lower} is the following lower
bound over the class $\cFF_\hw^\hr$ of functionals. 
We define $j_*:={\argmax}_{1\leq j\leq \kstar}({{\hw_j^{-1}\Tw_j^{-1}}})$ and  the functional $\ell_*$ given by
$[\frep_*]_j:= ( {\hr}/{\hw_{j_*}})^{1/2}$ for $j=j_*$ and 0 otherwise.  Obviously $\frep_*$ is an element of $\cFF_\hw^\hr$. By evaluating the lower bound given by Theorem \ref{res:lower}  for the specific functional $\frep_*$ we immediately obtain 
the following result and we omit its proof.
\begin{coro}\label{coro:lower:alt}
Under the conditions of Theorem \ref{res:lower} we have
\begin{equation*}  
\inf_{\tilde\ell}\sup_{\frep \in \cFF_\hw^{\hr}}
\risk[\tilde\ell,\cF_\bw^\br,\cN_\Tw^\Td]
\geqslant  
\frac{\kappa\tau}{4}\,\min \left( \frac{\sigma^2}{2\Td},{{\br}}\right) \,
\max_{1\leq j\leq \kstar}\Bigl(\frac{1}{\Tw_j\hw_j}\Bigr)\astar
.
\end{equation*}
\end{coro}
\begin{rem}
It is easily seen that 
 the lower bound given in Corollary \ref{coro:lower:alt} tends to zero if and
only if $(\hw_j\bw_j)_{j\geq1}$ tends to infinity. 
In other words,  consistency of an estimator of $\ell(\sol)$ uniformly over
 spheres in  
$\cF_{\bw}$  and $ \cFF_{1/\bw}$
is impossible.
This obviously reflects the ill-posedness of the underlying inverse
problem.\hfill$\square$
\end{rem}
By considering the typical choices of $\bw$ and $\Tw$ gathered in the
cases \textit{(ppp)},\textit{(pep)}, \textit{(ppe)} and \textit{(epp)} above,
we illustrate now the lower bounds 
for the minimax risks
$\inf_{\tilde\ell}\risk[\tilde\ell,\cF_\bw^\br,\cN_\Tw^\Td] $ and
$\inf_{\tilde\ell}\sup_{\frep\in\cFF_{\hw}^\hr}\risk[\tilde\ell,\cF_\bw^\br,
\cN_\Tw^\Td ] $.
We see 
from  Theorem~\ref{res:lower} and Corollary~\ref{coro:lower:alt},
respectively, that their orders  are determined by the sequences
$\delta^*:=(\dstar)_{n\geq 1}$ and $\Delta^*:=(\Dstar)_{n\geq
1}$ given by
\begin{equation}
 \label{eq:def:dstar}
\dstar:=\max\left\{\astar\sum_{j=1}^\kstar
\frac{\fouv{\frep}_j^2}{\Tw_j},\sum_{j>\kstar}\frac{\fouv{\frep}_j^2}{\bw_j}
\right\}
\quad\text{and}\quad 
\Dstar:= \max_{1\leq j\leq \kstar}\Bigl(\frac{1}{\Tw_j\hw_j}\Bigr)\astar.
\end{equation}
In the
next assertion
 we present the orders of those sequences. 
\begin{prop}\label{lem:applic:1}
Let the assumptions of Theorem \ref{res:lower} hold true. Under the following conditions the Assumptions \ref{ass:reg} and \ref{ass:Kappa}
are satisfied and the lower bounds are determined by the orders of $\delta^*$ and $\Delta^*$ as given below.
\begin{enumerate}
 \item[(ppp)]
 If  $p>0$ and $a>1/2$, 
 then $\kstar\asymp n^{1/(2p+2a)}$
 and if
\begin{enumerate}
 \item[(i)]  $s>1/2-p$, then\\
 $\dstar\sim
\begin{cases}
 n^{-(2p+2s-1)/(2p+2a)}, &\text{if $s-a<1/2$}\\
 n^{-1}\log(n), &\text{if $s-a=1/2$}\\
 n^{-1}, &\text{if $s-a>1/2$},
\end{cases}
$ 
\item[(ii)]
 $s>-p$, then 
 $\Dstar\sim
\max(n^{-(p+s)/(p+a)} ,n^{-1}).
$
\end{enumerate}
\item[(pep)] If
 $p>0$ and $a>0$,
 then  $\kstar\asymp \log(n[\log(n)]^{-p/a})^{1/(2a)}$
 and if 
\begin{enumerate}
 \item[(i)] 
 $s>1/2-p$, then 
$\dstar\sim
[\log(n)]^{-(2p+2s-1)/(2a)}$,
\item[(ii)]
 $s>-p$, then
 $\Dstar\sim
[\log(n)]^{-(p+s)/a}.
$
\end{enumerate}
\item[(epp)] 
If 
$p>0$,
 $a>1/2$ and $s\in\R$ then $\kstar\asymp \log(n[\log(n)]^{-a/p})^{1/(2p)}$ and
\begin{enumerate}
 \item[(i)]   
 $ \dstar\sim
\begin{cases}
 n^{-1}[\log(n)]^{(2a-2s+1)/(2p)}, &\text{if $s-a<1/2$}\\
 n^{-1}\log[\log(n)], &\text{if $s-a=1/2$}\\
 n^{-1}, &\text{if $s-a>1/2$},
\end{cases}
$
\item[(ii)]    
$\Dstar\sim 
\max(n^{-1}[\log(n)]^{(a-s)/p} ,n^{-1}).$
\end{enumerate}
\item[(ppe)] 
 If  $p>0$, $a>1/2$ and $s>0$ then $\kstar\asymp n^{1/(2p+2a)}$  and
\begin{enumerate}
 \item[(i)] 
$\dstar\sim n^{-1}
$
\quad(ii) 
$\Dstar\sim 
n^{-1}.$
\end{enumerate}
\end{enumerate}
\end{prop}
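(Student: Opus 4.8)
The plan is to handle all four cases by one common scheme, since in each the orders of $\kstar$, $\dstar$ and $\Dstar$ are governed by the monotone sequence $\Tw_m/\bw_m$ together with two elementary sums. First I would fix $n$ and study $\Tw_m/\bw_m$, which under Assumption \ref{ass:reg} is non-increasing and equals $1$ at $m=1$; inserting the prescribed polynomial or exponential orders of $\bw$ and $\Tw$, it decays either polynomially (cases ppp, ppe) or exponentially with an iterated-logarithm correction (cases pep, epp). Solving $\Tw_m/\bw_m\asymp n^{-1}$ for the crossover index then gives the stated order of $\kstar$: a pure power $n^{1/(2p+2a)}$ when both sequences are polynomial, and a fractional power of $\log\bigl(n\,[\log n]^{c}\bigr)$ (with $c=-p/a$ in pep and $c=-a/p$ in epp) when one of them is exponential, the iterated logarithm arising from taking logarithms on both sides of $\Tw_m/\bw_m\asymp n^{-1}$ and re-substituting the leading order of $m$ into the lower-order term. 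Along the way I would record the key auxiliary fact $\astar=\max\{\Tw_\kstar/\bw_\kstar,n^{-1}\}\asymp n^{-1}$.

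The second preparatory step is to verify Assumptions \ref{ass:reg} and \ref{ass:kappa}. Assumption \ref{ass:reg} reduces to checking monotonicity of $\bw$ (immediate), summability of $\Tw$ — which forces $a>1/2$ exactly in the polynomially smoothing cases ppp and ppe, whereas any $a>0$ is admissible when $\Tw$ decays exponentially — and $\sup_{j}\{1/(\hw_j\bw_j)\}<\infty$, which in the (ii)-parts is precisely the stated lower restriction on $s$. The condition $s>1/2-p$ in the (i)-parts I would likewise recognise as the requirement $\rep\in\cF_{1/\bw}$, i.e.\ convergence of $\sum_j[h]_j^2/\bw_j$. I expect Assumption \ref{ass:kappa}, namely $\kappa>0$, to be the \emph{main obstacle}: it amounts to showing that the best attainable value of $\min\{\Tw_m/\bw_m,n^{-1}\}/\max\{\Tw_m/\bw_m,n^{-1}\}$ stays bounded away from zero uniformly in $n$, equivalently that the near-crossover index can always be chosen with $\Tw_\kstar/\bw_\kstar\asymp n^{-1}$. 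This is transparent in the polynomial cases, where consecutive ratios differ only by a bounded factor, but delicate in the exponential cases, where the successive values of $\Tw_m/\bw_m$ change by an unbounded factor near the crossover; controlling the best attainable ratio there, using the regularity of the decay and the freedom in choosing the integer $\kstar$, is where I would concentrate the effort.

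With $\kstar$ and $\astar\asymp n^{-1}$ in hand, the values of $\dstar$ and $\Dstar$ follow from asymptotics of finite sums. For $\dstar$ I would estimate the two competing terms separately: the tail $\sum_{j>\kstar}[h]_j^2/\bw_j$ is, whenever it converges, of the order of its leading summand, giving $\kstar^{-(2p+2s-1)}$ in the polynomial-$\bw$ cases and a super-polynomially small contribution when $\bw$ or $[h]_j^2$ decays exponentially; the weighted partial sum $\astar\sum_{j=1}^{\kstar}[h]_j^2/\Tw_j$ is governed by its largest summand and by whether $\sum_{j\leq\kstar}j^{2a-2s}$ converges, diverges or is borderline. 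The trichotomy $s-a\lessgtr 1/2$ in the polynomial cases is exactly this convergence alternative, the equality $s-a=1/2$ producing the harmonic sum and hence the extra $\log n$ (respectively $\log\log n$) factor. Comparing the two orders and taking the maximum reproduces each displayed value of $\dstar$. Finally $\Dstar=\astar\max_{1\leq j\leq\kstar}1/(\Tw_j\hw_j)$ is read off by locating the maximiser of $1/(\Tw_j\hw_j)$ over $\{1,\dots,\kstar\}$: at $j=\kstar$ when this sequence increases (giving the stated power of $\kstar$ times $n^{-1}$, compared against $n^{-1}$) and at $j=1$ when it decreases (giving $n^{-1}$), which yields each $\Dstar\sim\max(\,\cdot\,,n^{-1})$.
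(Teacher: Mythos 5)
Your proposal reproduces the paper's proof essentially step for step: the paper likewise determines $\kstar$ by balancing $\Tw_m/\bw_m$ against $n^{-1}$ (obtaining the iterated logarithm in the cases \textit{(pep)} and \textit{(epp)} by exactly your re-substitution of the leading order of $m$ into the lower-order term), then evaluates the two competing terms in $\dstar$ via the convergence trichotomy $s-a\lessgtr 1/2$ --- with the borderline case producing the $\log n$, respectively $\log\log n$, factor --- and reads off $\Dstar$ by locating the maximiser of $1/(\Tw_j\hw_j)$ at $j=\kstar$ or $j=1$; all of your sum asymptotics agree with the paper's. The one place you depart is the step you flag as the ``main obstacle'': you defer the verification of Assumption \ref{ass:kappa} in the exponential cases, whereas the paper disposes of it in a single unproved sentence (``in each case $\astar\sim n^{-1}$ and hence Assumption \ref{ass:kappa} holds''). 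Your concern is in fact the right one: the ratio-of-consecutive-values argument is automatic only when $\Tw_m/\bw_m$ changes by a bounded factor per step, which holds throughout \textit{(ppp)} and \textit{(ppe)} but in \textit{(pep)} and \textit{(epp)} only for $a\leq 1/2$ and $p\leq 1/2$ respectively; beyond that range the best attainable value of $\min\{\Tw_m/\bw_m,n^{-1}\}/\max\{\Tw_m/\bw_m,n^{-1}\}$ can degenerate along subsequences of $n$ falling geometrically midway in a gap of $\Tw_m/\bw_m$. So while your outline is no less complete at this point than the published argument, a finished proof would have to either supply the bounded-gap argument in the admissible regime or restrict the exponents accordingly --- this is the single substantive step neither your sketch nor the paper actually carries out.
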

\begin{rem}\label{rem:fin:upper:com:1}The rates given in Proposition \ref{lem:applic:1} determine up to a constant the minimax optimal rate of convergence, as 
we will show in Proposition \ref{lem:applic:2} below.
Nevertheless, we shall already emphasize here the interesting influence of the parameters $p$, $s$ and $a$ 
characterizing the \lq smoothness\rq\  of $\sol$, $\frep$ and the decay  of the eigenvalues of $\op$ respectively.
 As we see from Proposition \ref{lem:applic:1},   an increase of  the value of $a$ leads in each case to a slower
 obtainable optimal rate of convergence. Therefore, the parameter $a$ is often called { degree of ill-posedness} (c.f. \cite{Natterer84}).  
On the other hand, an increase of the value of $p$ or $s$ leads to a faster optimal rate. 
In other words values of a linear functional given by a smoother slope function or representer can be estimated 
faster, as expected. 
Moreover, in the cases \textit{(ppp)} and \textit{(epp)} 
the parametric rate $n^{-1}$ is obtained if and only if the functional  is \lq smoother\rq\ 
 than the degree of ill-posedness of $\op$ in  the sense that \textit{(i)} $s\geqslant a-1/2$ and \textit{(ii)} $s\geq a$. 
The situation is different  in the  cases \textit{(pep)} and \textit{(ppe)}, where the optimal rates are always logarithmic or parametric, respectively.
\hfill$\square$\end{rem}
\begin{rem}\label{rem:fin:upper:com:2} There is an interesting issue hidden in the parametrization that we have chosen.
 Consider a classical indirect regression model  given by the covariance operator $\op$ and Gaussian white noise $\dot W$,
 i.e., $g_n=\op \beta+n^{-1/2} \dot W$ (for details see e.g. \cite{HoffmannReiss04}). 
It is shown in \cite{JohannesKroll2008}  that, for example in case \textit{(ppp)}, the optimal rate of convergence over the classes 
$\cF_\bw^\br$ and $\cFF_\hw^\hr$ of any estimator of $ \ell(\sol)$ is of order $\max(n^{-(p+s)/(p+2a)},n^{-1})$. 
In contrast, 
Proposition \ref{lem:applic:1} states that the optimal rate in a functional linear model 
  is of order
 $\max(n^{-(p+s)/(p+a)},n^{-1})$.  
Thus, we see by comparing  the two rates  that the covariance operator $\op$ in a functional linear model has  
the {degree of ill-posedness} $a$ while the same operator has a {degree of ill-posedness} $(2a)$  in the indirect regression model. In other words, in a functional linear model we do not face the complexity of the
inversion of $\op$ but only of its square root $\op^{1/2}$. 
Similar remarks hold true for the other cases, however,  in case \textit{(pep)} the rate of convergence is the same as in an indirect regression model with Gaussian white noise 
(c.f. \cite{JohannesKroll2008}). This is due to the fact that  if
 $\Tw_{j}\asymp \exp(-r |j|^{2a})$ for some $r>0$, then the dependence of the rate of convergence  on the value 
$r$ is hidden in the constant.
\hfill$\square$\end{rem}
\subsection{The upper bound}
Proposition~\ref{res:gen:prop:cons}
 shows that the estimator $\widehat\ell_m$   defined in \eqref{gen:def:est}  is consistent for 
all slope functions  and  functionals belonging to $\cF_\bw$ and $\cFF_{1/\bw}$, respectively. The following theorem provides an upper bound if $\sol$ belongs to an ellipsoid $\cF_\bw^\br$.  
\begin{theo}\label{res:gen:upper} 
Let $\{(Y_i, X_i)\}_{i=1}^n$ be an i.i.d.\ sample of $(Y,X)$ satisfying
\eqref{intro:e1}.
Suppose that Assumptions \ref{ass:reg}, \ref{ass:MOM} and \ref{ass:Kappa}
  hold true and that  
\begin{equation}\label{gen:upper:varphi:cond}
\sup_{m\in\N}\Bigl\{\frac{\Tw_m}{\bw_m} m^{3} \Bigr\}<\infty.
\end{equation} Consider $\dstar$ 
as in
\eqref{eq:def:dstar} and the estimator  $\widehat\ell_m$  defined  with 
dimension $m:=\kstar$ given by \eqref{eq:def:kstar:astar}. 
 There exists a constant $\calc(d,\Tw,\bw)$ depending  on $d,\Tw$ and $\bw $  only	
such that $ \risk[\widehat\ell_{\kstar}, \cF_\bw^{\br},\cN_{\Tw}^{\Td} ]\leqslant
\calc(d,\Tw,\bw)\eta\{(\sigma^2+\br)\cdot \dstar+
\br\wnormV{\frep}{1/\bw}^2n^{-1}\}$ and therefore
 \[
\risk[\widehat\ell_{\kstar}, \cF_\bw^{\br},\cN_{\Tw}^{\Td} ]\leq  \calc(d,\Tw,\bw)  \eta(\sigma^2+\br+\br\wnormV{\frep}{1/\bw}^2)   
\cdot\inf_{\tilde\ell} 
\risk[\tilde\ell, \cF_\bw^{\br},\cN_{\Tw}^{\Td} ].
\]
\end{theo}
The rate 
$\delta^*:=(\dstar)_{n\geq 1}$ of the lower bound given  in Theorem \ref{res:lower} provides up to a constant also an upper bound of 
the estimator $\widehat{\ell}_{\kstar}$.  Thus, we have shown that  the rate 
$\delta^*$
 is optimal and hence  $\widehat{\ell}_{\kstar}$   is  minimax-optimal. 
 We observe that $\dstar\leq \hr\cdot \max_{1\leq j\leq \kstar}\Bigl({\Tw_j^{-1}\hw_j^{-1}}\Bigr)\astar=\hr\cdot\Dstar$ for all $\frep \in \cFF_\hw^\hr$
 and therefore we obtain the following result as a consequence of Theorem \ref{res:gen:upper} and we omit the proof.
\begin{coro}\label{coro:gen:upper} Under the conditions of Theorem \ref{res:gen:upper}
we have that \\$ \sup_{\frep\in\cFF_\hw^\hr}
\risk[\widehat\ell_{\kstar}, \cF_\bw^{\br},\cN_{\Tw}^{\Td} ]
\leqslant 
 \calc(d,\Tw,\bw)  \eta(\sigma^2+2\br)   
\cdot\hr\cdot\Dstar $ and therefore
\begin{gather*}
\sup_{\frep\in\cFF_\hw^\hr}
\risk[\widehat\ell_{\kstar}, \cF_\bw^{\br},\cN_{\Tw}^{\Td} ]
\leqslant  \calc(d,\Tw,\bw)  \eta(\sigma^2+2\br)   
\cdot \inf_{\tilde\ell}\sup_{\frep\in\cFF_\hw^\hr} 
\risk[\tilde\ell, \cF_\bw^{\br},\cN_{\Tw}^{\Td} ] .
\end{gather*}
\end{coro}  In the following we illustrate the previous results  by considering  the
typical choices of $\bw$ and $\Tw$
presented below Assumption \ref{ass:reg}.
\begin{prop}\label{lem:applic:2}
Let $\{(Y_i, X_i)\}_{i=1}^n$ be an i.i.d.\ sample of $(Y,X)$ satisfying
\eqref{intro:e1} and suppose that
 Assumption \ref{ass:MOM} holds true.
Let the estimator   $\widehat\ell_m$ be  defined  with  dimension $m:=\kstar$  as
  given in  Proposition \ref{lem:applic:1}.
Then ${\widehat\ell_{\kstar}}$ attains the optimal rates $\dstar$ and respectively
$\Dstar$ given in Proposition \ref{lem:applic:1}
 if we additionally assume $p+a\geq 3/2$ in the cases  
\textit{(ppp)} and \textit{(ppe)}. 
 \end{prop}
\begin{rem}\label{rem:comp:ch}
It is of interest to compare our results  with those of \cite{CaiHall2006} who 
consider only \textit{(i)} in the case \textit{(ppp)}. In their notations the
decay of the eigenvalues of $\op$ 
is assumed to be of order $(|j|^{-\otheralpha})$,
 i.e., $\otheralpha=2a$, with $\otheralpha>1$. 
Furthermore, they suppose  a decay of the coefficients of the  slope function and the representing sequence $[\ell]$  of order  $(|j|^{-\otherbeta})$, i.e., $\otherbeta=p+1/2$, with
$\otherbeta\geq \alpha +2$, and
$(|j|^{-\othergamma})$, i.e., $\othergamma=s$, with $\othergamma>1/2$ respectively. By using this parametrization we see that our results in the case \textit{(ppp)} imply the same
rate of convergence  as the one presented in \cite{CaiHall2006}. 
However, we shall stress that the condition  $\otherbeta\geq \otheralpha+2$ or equivalently  $p\geq 3/2 +2a$ is much stronger than the condition $p+a\geq3/2$ used in
Proposition~\ref{lem:applic:2}.
\hfill$\square$
\end{rem}
\subsection{Optimal point-wise and local average estimation}
\label{sec:illu}
%\setcounter{chapter}{3}
%\setcounter{equation}{0} %-1
%\setcounter{section}{0}
%\noindent {\bf \arabic{chapter}. Illustration.}
We continue the discussion of Section \ref{subs:pwlae}.
\paragraph{Point-wise estimation - continued} 
Recall that 
$\ell_{t_0}(\hsol_{\kstar})=\hsol_{\kstar}(t_0)$
 with $[\frep_{t_0}]^2_j=\bas_j^2(t_0)\asymp j^{-2s}$ and $s=0$.
By applying Proposition~\ref{lem:applic:2}, the estimator's maximal mean squared error over
the classes
 $\cF_\bw^{\br}$ and $\cN_{\Tw}^{\Td}$
 is uniformly bounded for $t_0\in[0,1]$ up to a constant by $\delta^*_n$, i.e., 
$
\sup_{\sol \in \cF_\bw^{\br}} \sup_{\op \in \cN_\Tw^{\Td}}\Ex|\widehat{\beta}_{\kstar}(t_0) -\sol(t_0)|^2
\leq C \dstar
 $ for some $C>0$. Moreover, due to Proposition~\ref{lem:applic:1},  $\delta^*_n$ is  the minimax-optimal rate of convergence.	
 This means in the three considered cases:
\begin{itemize}
 \item[\textit{(ppp)}] 
If  $p>1/2$, $a>1/2$ and  $p+a\geq3/2$, then $\kstar\asymp n^{1/(2p+2a)}$ and\\
$\dstar\sim n^{-(2p-1)/(2p+2a)}$.
 \item[\textit{(pep)}]  If $p>1/2$ and $a>0$, then $\kstar\asymp \log(n[\log(n)]^{-p/a})^{1/(2a)} $ and\\
$\dstar\asymp [\log(n)]^{-(2p-1)/2a}$.
 \item[\textit{(epp)}] If $p>0$ and $a>1/2$, then $\kstar\asymp \log(n[\log(n)]^{-a/p})^{1/(2p)}$
 and\\
$\dstar\asymp n^{-1}[\log(n)]^{(2a+1)/2p}$.
\end{itemize}
Let us  compare the optimal rate $n^{-(2p-1)/(2p+1)}$  in a direct regression model with a $p$-times differentiable  slope function $\sol$ with  our results in the cases \textit{(ppp)} and \textit{(pep)}. Obviously, the optimal rate in a functional linear model is never faster than the one of a direct regression model. Furthermore, they would  only coincide in the case \textit{(ppp)}  for $a=1/2$, however, this cannot hold true for any random function $X$ with finite second moment.\\
 It is interesting to note that by  slightly adapting the previously presented procedure we are able to  estimate the value of the $\sfs$-th derivative of $ \sol $ at $t_0$. 
 Given the exponential basis, which is linked 
 to the trigonometric basis 
 for $k \in \mathbb{Z}$ and $t \in [0, 1]$
 by the relation
   $\exp(2i \pi kt) = 2^{−1/2}
  (\bas_{2k}(t) + i \bas_{2k+1}(t))$ 
  with $i^2 = −1$. We recall that for $0\leq  \sfs < p$ the $\sfs$-th derivative $\sol^{(\sfs)}$
  of $\sol$ in a weak sense
  satisfies
 \[ \sol^{
  (\sfs)}(t_0) =
  \sum_{
  k\in\mathbb{Z}}
  (2i \pi k)^{\sfs}\exp(2i\pi kt_0)
 \Bigl(
 \int
 ^1
 _0
  \sol(u) 
 \exp(−2i\pi ku) 
 du
 \Bigr)
 .\]
  Given a dimension $m \geq 1$, we denote now by $[{\hop}]_{\um}$ the $(2m + 1) \times (2m + 1)$ matrix with
  generic elements $\HskalarV{\bas_j
  ,\hop \bas_k }$, $-m \leq j, k \leq m$ and by $[{\widehat{g}}]_{\um}$ the $(2m + 1)$ vector with elements
  $\HskalarV{\widehat{g}, \bas_j} $, $-m \leq j \leq m$. Furthermore, we define  for integer $\sfs$   the $(2m + 1)$ vector $[{\ell^{(\sfs)}_{t_0}}]_{\um}$ with elements
 $  
 [{\ell^{(\sfs)}_{t_0}}]_{j}:=(2i \pi j)^{\sfs}\exp(2i\pi j t_0)
 $, $-m \leq j \leq m$.
 In the following we shall assume that the point evaluation of the $\sfs$-th derivative  is well-defined on
 the set of slope parameters $\cF_\bw$ which is  implied by  
$\sum_{j\geq1}(j^{2\sfs}\bw_j^{-1})<\infty$,  since $|[\ell_{t_0}^{(\sfs)}]_j|^2\asymp j^{2\sfs}$.
 Obviously, this additional condition is automatically satisfied in case  \textit{(pep)} and requires the  assumption $\sfs<p-1/2$ in the cases  \textit{(ppp)} and \textit{(ppe)}. 
 We consider the estimator of $\sol
 ^{(\sfs)}(t_0)=\ell^{(\sfs)}_{t_0}(\sol)$
  given by
 \[
   \hsol^{(\sfs)}_m({t_0})=\begin{cases}                         [{\ell^{(\sfs)}_{t_0} }]_{\um}^t
[{\hop}]_{\um}^{-1}
 [{\hgf}]_{\um} &
 \text{if $[\hop]_{\um}$ is non-singular and }\mnormV{[\hop]^{-1}_{\um}}\leq  n,\\
 0,&\text{otherwise.}
                         \end{cases}
 \]
  The estimator $\hsol^{(\sfs)}_{\kstar}(t_0)$ 
 can be represented as $\frep^{(\sfs)}_{t_0}({\hsol_{\kstar}})$  with $[\frep_{t_0}^{(\sfs)}]_j^2\asymp j^{-2s}$ and $s=-q$. 
By applying Proposition~\ref{lem:applic:1} and~\ref{lem:applic:2}, the
 maximal mean squared error over
the classes
 $\cF_\bw^{\br}$ and $\cN_{\Tw}^{\Td}$ of the estimator 
$\hsol_{m}^{(\sfs)}(t_0)$ 
 is uniformly bounded for $t_0\in[0,1]$ up to a constant by the minimax rate $\delta^*_n$.
 This means in the three considered cases:
\begin{itemize}
 \item[\textit{(ppp)}] 
If  $p>1/2$, $a>1/2$ and  $p+a\geq3/2$, then $\kstar\asymp n^{1/(2p+2a)}$ and\\
$\dstar\sim n^{-(2p-2\sfs-1)/(2p+2a)}$.
 \item[\textit{(pep)}]  If $p>1/2$ and $a>0$, then $\kstar\asymp \log(n[\log(n)]^{-p/a})^{1/(2a)} $ and\\
$\dstar\asymp [\log(n)]^{-(2p-2\sfs-1)/2a}$.
 \item[\textit{(epp)}] If $p>0$ and $a>1/2$, then $\kstar\asymp \log(n[\log(n)]^{-a/p})^{1/(2p)}$
 and\\
$\dstar\asymp n^{-1}[\log(n)]^{(2a+2\sfs+1)/2p}$.
\end{itemize}
\hfill$\square$
\paragraph{Local average estimation - continued} Recall that $\hell^{b}_m= b^{-1}\int_0^b\widehat\sol_m(t)dt  $ with $[\frep_b]_j^2\asymp j^{-2s}$ and $s=1$.
Its maximal mean squared error  over $\cF_\bw^{\br}$ and $\cN_{\Tw}^{\Td}$ is bounded up to a constant by $\delta^*_n$, that is 
$
\sup_{\sol \in \cF_\bw^{\br},\op \in \cN_\Tw^{\Td}}\Ex|\int_0^b\widehat{\beta}_{\kstar}(t)dt -\int_0^b\sol(t)dt|^2
\leq C\dstar$ for some $C>0$ (Proposition~\ref{lem:applic:2}). Moreover, due to Proposition~\ref{lem:applic:1}, 
$\delta^*_n$
 is
 again  minimax-optimal. In the three cases  the order of $\delta^*_n$ is given as follows:
\begin{itemize}
 \item[\textit{(ppp)}] 
If  $p\geq 0$, $a>1/2$ and  $p+a>3/2$, then  $\kstar\asymp n^{1/(2p+2a)}$ and\\ $
\dstar\sim n^{-(2p+1)/(2p+2a)}$.
 \item[\textit{(pep)}]  If $p\geq 0$ and $a>0$, then  $\kstar\asymp \log(n[\log(n)]^{-p/a})^{1/(2a)} $ and\\ $
\dstar\asymp ~ [\log(n)]^{-(2p+1)/2a}$.
 \item[\textit{(epp)}] If $p>0$ and $a>1/2$, then $\kstar\asymp \log(n[\log(n)]^{-a/p})^{1/(2p)}$  and \\$
\dstar\asymp n^{-1}[\log(n)]^{(2a-1)/2p}$.
\end{itemize}
In contrast to a  direct regression model where the average value of the regression function can be estimated with parametric rate $n^{-1}$, we find that  in the considered cases the
optimal rate is always slower than $n^{-1}$.   Observe that in the  cases \textit{(ppp)} and \textit{(epp)} the rate could only be parametric  for $a\leq 1/2$, which again cannot hold true.   \hfill$\square$ 
\paragraph{Conclusion} In this paper we have presented a minimax optimal
plug-in estimation technique that is suited to deal in particular with
point-wise estimation and the estimation of local averages. Obviously, the data driven  choice of the dimension parameter $m$ is only one  amongst the many
interesting questions for further research and we are currently exploring
this topic. 
Another one might be the exploitation of local structures and sparse representations of the slope parameter $\sol$ by wavelet thresholding techniques.

\appendix
\section{Proofs}
%\setcounter{chapter}{4}
%\setcounter{equation}{0} %-1
%\setcounter{section}{0}
%\noindent {\bf \arabic{chapter}. Proofs.}
%\section{Proofs.}
\label{app:proofs}
We begin by defining and recalling notations to be used in the proofs of this section. Given $m\geq 1$, 
let us denote by
$\enormV{\cdot}$ the euclidean norm in $\R^m$,  by $\foum{\Diag(\bw)}_{\um}$
the $m$-dimensional diagonal matrix with entries $(\bw_1,\ldots,\bw_m)$ and
 by $\Id$ the $m$-dimensional identity matrix. Furthermore recall that  $\sol_{m}\in\sspace_m$  denotes the Galerkin solution of $\gf=\op\sol$ defined by \eqref{app:unknown:Galerkin:1:2}.
Let us  introduce the notations
\begin{multline*}
[\hop]_{\um}=\frac{1}{n}\sum_{i=1}^n[X_i]_{\um}[X_i]_{\um}^t,\quad
[\Xi]_{\um}:= [\op]_{\um}^{-1/2}[\hop]_{\um}[\op]_{\um}^{-1/2} - \Id,\\\quad 
[Z]_{\um}:=
[\widehat{g}]_{\um}- [\hop]_{\um} [\sol_m]_{\um}.
\end{multline*} 
 Moreover, we define the events 
 \begin{multline*}\label{app:l:upp:def:o}
\Omega:=\{ \mnormV{[\hop]^{-1}_{\um}}\leq \thresh n \},\quad  \Omega_{1/2}:= \{\mnormV{[\Xi]_{\um}}\leq 1/2\}\\
\Omega^c:=\{ \mnormV{[\hop]^{-1}_{\um}} > \thresh n \}\quad\mbox{ and }\quad  \Omega_{1/2}^c=\{\mnormV{[\Xi]_{\um}}> 1/2\}.
\end{multline*}
We shall prove in the end of this section two technical Lemmata (\ref{pr:gen:upper:l2}  and \ref{pr:gen:upper:l1}) which are used in the following proofs.   Furthermore, we will denote by $C$ universal numerical constants and by $C(\cdot)$ constants depending only on the arguments. In both cases, the values of the constants may change from line to line.
\subsection{Proof of the consistency result}\label{app:proofs:gen}
\begin{proof}[\noindent{\sc Proof of Proposition \ref{res:gen:prop:cons}.}]
 Let us define $\widetilde\ell_m:=\ell(\sol_m)\1_\Omega$. Then the proof is based on the  decomposition 
\begin{equation*}
\Ex|\widehat\ell_m - \ell(\sol)|^2\leqslant 2\{ \Ex|\widehat\ell_m- \widetilde\ell_m|^2 + \Ex|\widetilde\ell_m - \ell(\sol)|^2\}
\end{equation*}
where we will bound each term on the right hand side  separately. On the one hand we have
\begin{equation}\label{pr:gen:prop:cons:e2}
\Ex|\widetilde\ell_m - \ell(\sol)|^2\leqslant 2\{|\ell(\sol-\sol_m)|^2+ |\ell(\sol)|^2\,P(\Omega^c)\}.
\end{equation}
On the other hand, we conclude from the  identity $\fouv{\hgf}_{\um}-\foum{\hop}_{\um}\fouv{\sol_m}_{\um} =\fouv{Z}_{\um}$ that
\begin{equation*}
\Ex|\widehat\ell_m - \widetilde\ell_m|^2=
\Ex\absV{\fouv{\frep}^{t}_{\um}\,\{\foum{\op}_{\um}^{-1}+
(\foum{\hop}_{\um}^{-1} - \foum{\op}_{\um}^{-1})
\}\,\fouv{Z}_{\um}}^{2}\1_\Omega.
\end{equation*}
By using $ \mnormV{[\hop]_{\um}^{-1}}\1_{\Omega} \leqslant n$ and $\mnormV{\{\Id+\foum{\Xi}_{\um}\}^{-1}}\1_{\Omega_{1/2}}\leq 2 $
 it follows that
\begin{align*}
\Ex|\widehat\ell_m - \widetilde\ell_m|^2&\leqslant
 4\Bigl[ \Ex\absV{[\frep]^{t}_{\um}\,[\op]_{\um}^{-1}[Z]_{\um}}^2 
\\&+
\enormV{[\frep]^{t}_{\um}\,[\op]_{\um}^{-1/2}}^2  \Bigl\{4 (\Ex\mnormV{[\Xi]_{\um}}^4)^{1/2}(\Ex\enormV{[\op]_{\um}^{-1/2}[Z]_{\um}}^4)^{1/2}\\
&+  
n^2 \, \mnormV{[\op]_{\um}}^2 \,  (\Ex\mnormV{[\Xi]_{\um}}^8)^{1/4}(\Ex\enormV{[\op]_{\um}^{-1/2}[Z]_{\um}}^8)^{1/4}(P(\Omega_{1/2}^c))^{1/2}
\Bigr\}\Bigr].
\end{align*}
 We observe that $ \Ex\absV{[\frep]^{t}_{\um}\,[\op]_{\um}^{-1}[Z]_{\um}}^2\leqslant \enormV{[\frep]^{t}_{\um}\,[\op]_{\um}^{-1/2}}^2 \sup_{z\in\R^m: z^tz=1}\Ex\absV{z^t\,[\op]_{\um}^{-1/2}[Z]_{\um}}^2$ and
 that $\mnormV{[\op]_{\um}}$ is less or equal than the operator norm $\mnormV{\op}$ of $\op$, which equals its largest eigenvalue.
 Therefore,   by using  \eqref{pr:gen:upper:l2:e1} - \eqref{pr:gen:upper:l2:e3}   in Lemma \ref{pr:gen:upper:l2} with $k=12$ we have
\begin{multline}\label{pr:gen:prop:cons:e1}
\Ex|\widehat\ell_m - \widetilde\ell_m|^2\leqslant
\numc\,(\enormV{[\frep]^{t}_{\um}\,[\op]_{\um}^{-1/2}}^2/{n})\,\eta\,
\Bigl\{\HnormV{\op^{1/2}(\sol-\sol_m)}^{2}+\sigma^2 \Bigr\}\\
\cdot \Bigl\{ 1+ {m^3}/{n}+
 \eta^{-1/2}\mnormV{\op}^2{m^3}{n}\,(P(\Omega_{1/2}^c))^{1/2}
\Bigr\}.
\end{multline}
The combination of \eqref{pr:gen:prop:cons:e2} and \eqref{pr:gen:prop:cons:e1} leads to the estimate
\begin{multline}\label{pr:gen:prop:cons:e3:1}
\Ex|\widehat\ell_m - \ell(\sol)|^2
\leqslant \numc\,\Bigl\{ |\ell(\sol-\sol_m)|^2+ |\ell(\sol)|^2\,P(\Omega^c)
+   (\enormV{[\frep]^{t}_{\um}\,[\op]_{\um}^{-1/2}}^2/n)\, \eta  \\\cdot \{ \HnormV{\op^{1/2}(\sol-\sol_m)}^{2}+\sigma^{2}\}
\Bigl[1+ {m^3}/{n}+\eta^{-1/2}\mnormV{\op}^2\,{m^3}{n}(P(\Omega_{1/2}^c))^{1/2}
\Bigr)\Bigr]\Bigr\}. \end{multline}
{We observe that  the identity $[\hop]_{\um}= [\op]^{1/2}_{\um}\{\Id+[\Xi]_{\um}\}[\op]^{1/2}_{\um}$ implies by the usual Neumann series argument that if $\mnormV{[\Xi]_{\um}}\leqslant 1/2$  then  $\mnormV{[\hop]^{-1}_{\um}} \leqslant 2\mnormV{[\op]^{-1}_{\um}}$. Thereby,  $ n\geqslant 2 \mnormV{\foum{\op}^{-1}_{\um}}$ implies $\Omega^c \subset\Omega_{1/2}^c$.}
Furthermore, due to \eqref{pr:gen:upper:l2:e3}   in Lemma \ref{pr:gen:upper:l2} with $k=12$   we obtain by applying Markov's inequality that    $P(\Omega_{1/2}^c)\leqslant \numc \eta m^{24}n^{-12}$.
This leads to
\begin{multline}\label{pr:gen:prop:cons:e3:2}
\Ex|\widehat\ell_m - \ell(\sol)|^2
\leqslant \numc\,\Bigl\{ |\ell(\sol-\sol_m)|^2+ |\ell(\sol)|^2\,(m^{3}/n)^8n^{-4}\,\eta \\+ (\enormV{[\frep]^{t}_{\um}\,[\op]_{\um}^{-1/2}}^2/n)\, \eta \, \{\HnormV{\op^{1/2}(\sol-\sol_m)}^{2}+\sigma^{2}  \} 
\Bigl[ 1+ ({m^3}/{n})
+  
({m^{3}}/{n})^{5} 
\mnormV{\op}^2 \Bigr]\Bigr\}
 \end{multline}
 Furthermore, for each $\sol\in\cF_\bw
$, we have  $\wnormV{\sol-\sol_m}{\bw}=o(1)$ as $m\to\infty$ from condition \eqref{eq:suff:cond:Galerkin}, which implies 
$|\ell(\sol-\sol_m)|^2=o(1)$ and
 $\HnormV{\op^{1/2}(\sol-\sol_m)}=o(1)$ as $m\to\infty$ under Assumption  \ref{ass:reg}. 
Consequently,
the
conditions   $m^3=O(n)$
and $[\frep]_{\um}^t[\op]_{\um}^{-1}[\frep]_{\um}=o(n)$ as $n\to\infty$
  ensure the convergence to zero of  the  bound given  in \eqref{pr:gen:prop:cons:e3:2} as $n\to\infty$, which proves the result.
\end{proof}

\noindent{\sc Proof of Corollary \ref{res:gen:coro:cons}.}
First, we prove that $\op\in\cN_{\Tw}^{\Td}$ implies \eqref{eq:suff:cond:Galerkin}.
Let us denote by $\Pi_m$ and $\Pi_m^\perp$ the orthogonal projections on $\sspace_m$ and its orthogonal complement, respectively. On the one hand, we have   $\wnormV{\Pi_m^\perp  \sol}{\bw}=o(1)$  as $m\to\infty$
by Lebesgue's dominated convergence theorem. On the other hand, 
from the identity $[\Pi_m \sol-\sol_m]_{\um} = -[\op]_{\um}^{-1}[\op \Pi_m^\perp \sol]_{\um}$ 
we conclude  
  $\wnormV{\Pi_m \sol
  -\sol_m}{\bw}^2
\leq 2(1+d^2)\wnormV{\Pi_m^\perp \sol   }{\bw}^2$
for all $\op \in\cN_{\Tw}^{\Td}$, because the estimate \eqref{pr:gen:upper:l1:e3b} in Lemma \ref{pr:gen:upper:l1} implies
$\sup_{\wnormV{f}{\bw}=1} 
\enormV{
\foum{\Diag(\bw)}^{1/2}_{\um}
\foum{\op}_{\um}^{-1}
(\fouv{\op f}_{\um}-\foum{\op}_{\um}\fouv{f}_{\um})
}^2
\leqslant  2(1+\Td^2)$.
By combining the two results, we obtain the assertion.
It remains to show that \eqref{eq:bed:red} can be substituted by \eqref{eq:neu:3.3}. 
 Due to
 \eqref{pr:gen:upper:l1:e0} in Lemma \ref{pr:gen:upper:l1} the link condition  $\op \in\cN_{\Tw}^\Td$ implies ${\Tw_m}\mnormV{[\op]^{-1}_{\um}}\leqslant {4\Td^3}$. Since $1/\Tw_m=o(n)$  we conclude  that $\mnormV{[\op]^{-1}_{\um}}=o(n) $ as $n\to\infty$. 
 Furthermore, from   \eqref{pr:gen:upper:l1:e1} in Lemma \ref{pr:gen:upper:l1} we have
$
[\frep]^{t}_{\um}\,[\op]_{\um}^{-1}[\frep]^{t}_{\um}
\leq  \sum_{j=1}^{m}[\frep]_j^2\Tw_j^{-1}\mnormV{[\Diag(\Tw)]_{\um}^{1/2}[\op]_{\um}^{-1/2}}^2\leq 4d^3 \sum_{j=1}^{m}[\frep]_j^2\Tw_j^{-1}
$.
Thus $ \sum_{j=1}^{m}[\frep]_j^2\Tw_j^{-1} =o(n)$ implies $ [\frep]^{t}_{\um}\,[\op]_{\um}^{-1}[\frep]^{t}_{\um}=o(n) $
as $n\to\infty$,  which proves the result.
\hfill$\square$
\subsection{Proof of  the lower bound}\label{app:lower:proofs}
\begin{proof}[{\sc Proof of Theorem \ref{res:lower}.}]  
Consider $\epsilon$ and $X$ with  $\op\in\cN_{\Tw}^{\Td}$   such that Assumption  \ref{ass:MOM} is satisfied  and  
 $\epsilon|\HskalarV{X,f}\sim\cN(0,1)$  holds true for all $f\in\mmH$. Assume i.i.d.\ copies $\{(\epsilon_i,X_i)\}_{i=1}^n$   of $(\epsilon,X)$ and 
let $\sol_*\in\cF_{\bw}^\br$ with
$2\Td n \wnormV{\sol_*}{\Tw}^2\leq \sigma^2$, to be specified below.
Obviously, 
for
each $\theta \in \{-1,1\}$ the function $\sol_{\theta}:=\theta\, \sol_{*}$ belongs also to $\cF_\bw^\br$
and 
the random variables $\{(Y_i,X_i)\}_{i=1}^n$ with 
$Y_i:=\HskalarV{\sol_{\theta},X_i}+\sigma\epsilon_i$ form a sample of the model \eqref{intro:e1}. We denote its joint distribution by  $\PP_{\theta}$. 
We observe that  the conditional distribution of $Y_i$ given $X_i$  is Gaussian with mean  $
 \theta \HskalarV{\sol_*,X_i}
$  and variance $\sigma^2$. Thereby, it is easily seen that   the expectation of the log-likelihood function of ${\PP}_{1}$ with respect to  ${\PP}_{-1}$
 satisfies
\begin{equation*}
\Ex_{{\PP}_{-1}}[\log(d{\PP}_{1}/\d{\PP}_{-1})]= (2 n/\sigma^2) \, 
\HskalarV{\op\sol_*,\sol_*}= (2 n/\sigma^2)\HnormV{\op^{1/2}\sol_*}^2.
\end{equation*}In terms of  Kullback-Leibler divergence
this means  that the inequality  $\KL(\PP_{1},\PP_{-1})\\\leqslant 
(2d  n/\sigma^2)\wnormV{\sol_*}{\Tw}^2\leq 
1$ holds true,
 by  using the  inequality of \cite{Heinz51}, i.e.,  
for all $|s|\leq 1$, $f\in\mmH$ and $T\in\cN_{\Tw}^{\Td}$ we have 
$\HnormV{T^{s}f}^2\leq d^{2|s|}\wnormV{f}{\Tw^{2s}}^2$, together with the condition  $2\Td n \wnormV{\sol_*}{\Tw}^2\leq \sigma^2$.
Due to the bounded Kullback-Leibler divergence, Le Cam's general method (see \cite{Lecam1973}) and Pinsker's inequality  allow us 
to derive a lower bound. 
However, in this special setting a lower bound  can be obtained by the following elementary steps. We consider the  Hellinger affinity $\rho(\PP_{1},\PP_{-1})= \int \sqrt{\d\PP_{1}\d\PP_{-1}}$ and obtain for any estimator $\breve{\ell}$ and for all  $\frep\in \cFF_{1/\bw}$ that  
\begin{align}\nonumber
\rho(\PP_{1},\PP_{-1})&\leqslant 
\int \frac{
|
\breve{\ell}
-
\ell(\sol_{1})
|
}{2
|\ell(\sol_{*})|
} 
\sqrt{\d\PP_{1}\d\PP_{-1}} 
+
\int \frac{
|
\breve{\ell}
-
\ell(\sol_{-1})
|
}{2
|\ell(\sol_{*})|
} 
\sqrt{\d\PP_{1}\d\PP_{-1}} 
\\\label{pr:lower:e4}
&\leqslant 
\Bigl( 
\int  \frac{|
\breve{\ell}
-
\ell(\sol_{1})
|^2}
{4
|\ell(\sol_{*})|
^2}\d\PP_{1}
\Bigr)^{1/2}
+
\Bigl( 
\int  \frac{|
\breve{\ell}
-
\ell(\sol_{-1})
|^2}
{4
|\ell(\sol_{*})|^2}\d\PP_{-1}
\Bigr)^{1/2}.
\end{align}
By using the identity $\rho(\PP_{1},\PP_{-1})=1-\frac{1}{2}\H^2(\PP_{1},\PP_{-1})$  it follows  
from
 \eqref{pr:lower:e4}   that
\begin{equation}\label{pr:lower:e5}
\left\{\Ex_{{\PP_{1}}}|
\breve{\ell}
-
\ell(\sol_{1})
|^2
+ 
\Ex_{{\PP_{-1}}}
|\breve{\ell}
-
\ell(\sol_{-1})
|^2
\right\}\geqslant
\frac{1}{2}
|\ell(\sol_{*})|
^2
 \end{equation}
since the
 Hellinger distance $\H(\PP_{1},\PP_{-1})$ between $\PP_{1}$ and $\PP_{-1}$ satisfies
 $\H^2(\PP_{1},\PP_{-1}) \leqslant \KL(\PP_{1},\PP_{-1})\leq 1$.  
 From \eqref{pr:lower:e5} we conclude for each estimator $\breve{\ell}$ 
 that  
\begin{align}\notag
\sup_{\sol \in \cF_\bw^\br} &\Ex|\breve{\ell}-\ell(\sol)|^2 \geqslant \sup_{\theta\in \{-1,1\}} \Ex_{\PP_\theta}|\breve{\ell} -\ell(\sol_{\theta})|^2\\
&\geqslant \frac{1}{2}
\Bigl\{\Ex_{{\PP_{1}}}|
\breve{\ell}
-
\ell(\sol_{1})
|^2
+ 
\Ex_{{\PP_{-1}}}
|\breve{\ell}
-\ell(\sol_{-1})
|^2
\Bigr\}
\geqslant 
\frac{1}{4}|\ell(\sol_{*})|
^2 \label{pr:lower:e1}.
\end{align}
We will obtain the claimed result of the theorem by evaluating \eqref{pr:lower:e1} for two special choices of $\sol_*\in \cF_\bw^\br$
 with $2\Td n \wnormV{\sol_*}{\Tw}^2\leq \sigma^2$, which we will construct in the following.
Define $\zeta:=\min(\frac{\sigma^2}{2d},\br)$ and let $\kappa$ be given by \eqref{eq:def:kappa}.
 On the one hand, consider the slope function $\sol_{*}:= \sum_{j=1}^{\kstar}\fouv{\sol_{*}}_{j}\,\bas_{j}$, with coefficients $\fouv{\sol_{*}}_{j}:= {\fouv{\frep}_j}{\Tw_j^{-1}}
({{\zeta\kappa\astar
})^{1/2}({ \sum_{j=1}^{\kstar}{\fouv{\frep}_j^2}{\Tw_j^{-1}}}})^{-1/2}$.
 Since $\Tw/\bw$ is monotonically decreasing and by using the definition of $\kappa$ and $\zeta$  it follows that
 $\wnormV{\sol_{*}}{\bw}^2
 \leq {\zeta
 \kappa\astar\bw_{\kstar
 }{
 {\Tw_{\kstar}^{-1}}{}
 }
 }
 \leq \zeta\leq \br
 $
 and, hence $\sol_{*}\in\cF_\bw^\br$.
 Furthermore, we have
 that 
 $\label{pr:lower:e3}
  {2\,d\, n  }{}  \wnormV{\sol_*}{ \Tw}^2
 ={2d\zeta}{}{\kappa\astar}{n}\leq {2d\zeta}{}
 \leqslant \sigma^2.
 $
{Obviously, by
evaluating
 \eqref{pr:lower:e1} we  conclude  $\sup_{\sol \in \cF_\bw^\br} \Ex|\breve{\ell}-\ell(\sol)|^2\geq (\kappa/4)\zeta\astar\sum_{j=1}^{\kstar}{\fouv{\frep}_j^2}{\Tw_j^{-1}}$.} 
On the other hand, consider 
 $\sol_*:= \sum_{j>\kstar}\fouv{\sol_*}\bas_j$ with $ \fouv{\sol_*}_j:=({\kappa\zeta})^{1/2}(\bw_j^{2}{\sum_{j>\kstar}\fouv{\frep}_j^2\bw_j^{-1}})^{-1/2}
\fouv{\frep}_j$  we conclude from $\kappa\leq 1$ and
$
\wnormV{\sol_*}{\bw}^2=\sum_{j>\kstar}\fouv{\sol_*}_j^2\bw_j=\zeta\kappa\leq \rho
$ that $\sol_*$  belongs to $\cF_\bw^\br$.
Moreover, we have
$\label{eq:einschub:1}
{2nd}\wnormV{\sol_*}{\Tw}^2
\leq {2nd\zeta}{}{\kappa\Tw_{\kstar}}{{\bw_{\kstar}}^{-1}}\leq{2d\zeta}{}\leq\sigma^2.
$
By evaluating \eqref{pr:lower:e1} we obtain
$
\sup_{\sol \in \cF_\bw^\br} \Ex|\breve{\ell}-\ell(\sol)|^2 
\geqslant ({\kappa}/{4})\zeta
\sum_{j>\kstar}\fouv{\frep}_j^2\bw_j^{-1}.
$
Combining the two lower bounds, which hold true for arbitrary $\op\in\cN_{\Tw}^{\Td}$, 
we obtain 
\[\inf_{\tilde\ell}\inf_{\op\in\cN_{\Tw}^{\Td}}\sup_{\sol \in \cF_\bw^\br} \Ex|\breve{\ell}-\ell(\sol)|^2\geq \frac{\kappa}{4}\zeta
\max\Bigl\{
\astar\sum_{j=1}^{\kstar}{\fouv{\frep}_j^2}{\Tw_j^{-1}}, \sum_{j>\kstar}\fouv{\frep}_j^2\bw_j^{-1}  \Bigl\},\]
which implies the result of the theorem.
 \end{proof}
\noindent{\sc Proof of Proposition \ref{lem:applic:1}.}
We start our proof with the observation that under the conditions on $p$, $a$ and $s$ given in the proposition the sequences $\bw$, $\Tw$ and $\hw$ satisfy Assumption \ref{ass:reg} and Assumption \ref{ass:Kappa}.\\
Proof of \textit{(ppp)}.
From the definition of $\kstar$ in \eqref{eq:def:kstar:astar} it follows that $\kstar\sim n^{1/(2a+2p)}$.
Consider case \textit{(i)}. The condition $s-a<1/2$ implies
$n^{-1}\sum_{j=1}^\kstar |j|^{2a-2s}\sim n^{-1}(\kstar)^{2a-2s+1}\sim n^{-(2p+2s-1)/(2p+2a)}$
 and
moreover we have
$\sum_{j>\kstar}|j|^{-2p-2s}\sim n^{-(2p+2s-1)/(2p+2a)}$ since $p+s>1/2$.
If $s-a=1/2$, then
 $n^{-1}\sum_{j=1}^\kstar|j|^{2a-2s}\sim n^{-1}\log(n^{1/(2p+2a)})$ and 
$\sum_{j>\kstar}|j|^{-2p-2s}\sim n^{-1}$.
In the case $s-a>1/2$ it follows that $\sum_{j=1}^\kstar|j|^{2a-2s}$
is bounded. Moreover,  there exists a constant $c>0$ such that
 $\sum_{j>\kstar}|j|^{-2p-2s}\leq c\cdot n^{-1}$, or   $\sum_{j>\kstar}|j|^{-2p-2s}\lesssim n^{-1}$ for short,
and hence 
$\dstar\sim n^{-1}$.
To prove  \textit{(ii)}\ 
we make use of Corollary \ref{coro:lower:alt}. We observe that if $s-a\geq0$ the sequence $\hw\Tw$ is bounded from below,
and hence $\dstar \sim n^{-1}$. On the other hand, the condition $s-a<0$ implies 
$\dstar\sim n^{- (p+s)/(p+a)}$.\\
Proof of \textit{(pep)}. If  $\Tw$ is exponentially decreasing, then $\kstar$ satisfies
$\exp(-(\kstar)^{2a})\sim n^{-1}(\kstar)^{2p}$ or equivalently 
$\kstar\sim \log(n[\log(n)]^{-p/a})^{1/(2a)}$. To prove \textit{(i)}, we calculate 
$\sum_{j>\kstar}|j|^{-2p-2s}\sim [\log(n)]^{(-2p-2s+1)/(2a)}$ and 
$n^{-1}\sum_{j=1}^\kstar\exp(|j|^{2a})|j|^{-2s}\lesssim n^{-1}\exp(\kstar^{2a})\sim 
[\log (n)]^{(-2p-2s+1)/(2a)}$. In case \textit{(ii)}\ we immediately obtain
 $\dstar\sim [\log(n)]^{-(p+s)/a}$.\\
Proof of \textit{(epp)}. Only  $\bw$ is an exponential sequence and hence we have $\kstar\sim n^{-1}\exp((\kstar)^{2p})$ or equivalently 
$\kstar\sim \log(n[\log(n)^{-a/p}])^{1/(2p)}$. Consider case \textit{(i)}. 
If $s-a<1/2$, then
$n^{-1}\sum_{j=1}^\kstar |j|^{2a-2s}\sim n^{-1}[\log(n)]^{(2a-2s+1)/(2p)}$.
If $s-a=1/2$, then
$n^{-1}\sum_{j=1}^\kstar |j|^{2a-2s}\sim n^{-1}\log(\log(n))$.
On the other hand, the condition $s-a>1/2$ implies that 
$\sum_{j=1}^\kstar |j|^{2a-2s} $ is bounded and thus, we obtain the parametric rate $n^{-1}$. By using
$\sum_{j>\kstar}|j|^{-2s}\exp(-|j|^{2p})\lesssim \exp(-(\kstar)^{2p})(\kstar)^{2s-2p+1} \\\sim
n^{-1}[\log(n)]^{(-2a-2s-2p+1)/(2p)}$ it is easily seen that this sum is dominated by
$n^{-1}\sum_{j=1}^\kstar |j|^{2a-2s}$. 
In case \textit{(ii)}\ if $s-a\geq 0$
 then the sequence $\hw\Tw$ is bounded from below as mentioned 
above and thus,
 $\dstar\sim n^{-1}$. 
If $s-a<0$ then $\dstar\sim n^{-1}[\log(n)]^{(a-s)/p}$.\\
Proof of \textit{(ppe)}. As in case of \textit{(ppp)}\ both sequences $\bw$ and $\Tw$ are polynomial and thus $\kstar \sim n^{1/(2p+2a)}$.
Consider case \textit{(i)}\ where the coefficients of $\rep$ decrease exponentially. Obviously the sum
$\sum_{j=1}^\kstar|j|^{2a}\exp(-|j|^{2s})$ is bounded and moreover, $\sum_{j>\kstar}|j|^{-2p}\exp(-|j|^{2s})\lesssim
\exp(-|\kstar|^{2s})|\kstar|^{-2s-2p+1}\lesssim n^{-1}$. Consequently, we have $\dstar\sim n^{-1}$. Also in case \textit{(ii)} it
 obviously holds $\dstar\sim n^{-1}$, which completes the proof.\hfill$\square$
\subsection{Proof of the upper bound}
The following technical lemma is used in the proof of Theorem  \ref{res:gen:upper}.
\begin{lem}\label{lem:zusatz:Wkeit} If the assumptions of Theorem \ref{res:gen:upper} hold true, then  we have  $P(\Omega^c)\leq C(\bw,\Td,\Tw)\, \eta\, n^{-1}$.
\end{lem}
\begin{proof}[{\sc Proof.}]
Our proof starts with the observations that $\kappa\bw_{\kstar}\leq n\Tw_{\kstar}$ for all $n\geq 1$ by exploiting Assumption \ref{ass:Kappa} and that  $2\mnormV{[\op]^{-1}_{\um}}\leqslant 8\Td^3\Tw_{m}^{-1}$ for all  $\op \in\cN_{\Tw}^\Td$ and $m\geq1$ due to
\eqref{pr:gen:upper:l1:e0} in Lemma \ref{pr:gen:upper:l1}. Combining both estimates with $ \bw_{\kstar}^{-1}=o(1)$ as $n\to\infty$ we conclude $2\mnormV{[\op]^{-1}_{\ukstar}}=o(n)$. Therefore, there exists an integer $n_0:=n_0(\bw,\Td,\Tw)$ such that for all $n\geq n_0$ we have $2\mnormV{[\op]^{-1}_{\ukstar}}\leq n^{-1}$, and particularly  $\Omega_{1/2}\subset \Omega$ by applying the usual Neumann series argument.
We distinguish in the following the cases $n<n_0$ and $n\geq n_0$. Consider first  $n\geq n_0$,  from  Markov's inequality together with    \eqref{pr:gen:upper:l2:e3}   in Lemma
\ref{pr:gen:upper:l2} we obtain $P(\Omega^c)\leq P(\Omega_{1/2}^c)\leqslant \numc \eta (\kstar)^{6}n^{-3}$. Taking into account the condition \eqref{gen:upper:varphi:cond}, that is  
$D:= D(\bw,\Tw):= \sup_{m\in\N}\Bigl\{\frac{\Tw_m}{\bw_m} m^{3} \Bigr\}<\infty$,
 we
have $(\kstar)^{6}n^{-3}\leq ((\kstar)^3\Tw_{\kstar}\bw_{\kstar}^{-1}\kappa^{-1})^{2}n^{-1}\leq D^2\kappa^{-2}n^{-1}$, and hence $P(\Omega^c)\leq \numc \eta
D^2\kappa^{-2}n^{-1}$ for all $n\geq n_0$. On the other hand, if $n<n_0$ then trivially $P(\Omega^c)\leq n^{-1}n_0$. Since $n_0$, $D$ and $\kappa$ depend  on
$\bw,\Td$ and $\Tw$ only we obtain the result by combining both cases, which completes the proof.
\end{proof}
\begin{proof}[\noindent{\sc Proof of Theorem \ref{res:gen:upper}.}] 
Consider again the bound \eqref{pr:gen:prop:cons:e3:1}. By 
using that $\mnormV{\op} \leq d$ for $ \op\in\cN_{\Tw}^{\Td}$, $ (\kstar)^3n^{-1}\leq D\kappa^{-1}$ with $D= \sup_{m\in\N}\Bigl\{\frac{\Tw_m}{\bw_m} m^{3} \Bigr\}<\infty$
and recalling that  $P(\Omega_{1/2}^c)\leq C\eta ({\kstar})^{24}n^{-12}$
 we obtain
\begin{multline*}
\Ex|\widehat\ell_{\kstar} - \ell(\sol)|^2\leqslant \numc\,\Bigl\{ |\ell(\sol-\sol_{\kstar})|^2+ |\ell(\sol)|^2P( \Omega^c)\\
 +(\enormV{[\frep]^{t}_{\ukstar}\,[\op]_{\ukstar}^{-1/2}}^2/n)\, \eta \, \{\HnormV{\op^{1/2}(\sol-\sol_{\kstar})}^{2}+\sigma^{2}\}\,\Bigl[1+ D\kappa^{-1}+d^2\,
 {D^5}{\kappa^{-5}}\Bigr]\Bigr\}.
\end{multline*}
 From \eqref{pr:gen:upper:l1:e1}, \eqref{pr:gen:upper:l1:e3} and  \eqref{pr:gen:upper:l1:e4} in Lemma \ref{pr:gen:upper:l1} we conclude
$(\enormV{[\frep]^{t}_{\ukstar}\,[\op]_{\ukstar}^{-1/2}}^2/n)\leq  4d^3\dstar$, furthermore
  $\HnormV{\op^{1/2}(\sol-\sol_{\kstar})}^{2}\leqslant 10 \Td^5¸\wnormV{\sol}{\bw}^2$ and $|\ell(\sol-\sol_{\kstar})|^2\leqslant
16\wnormV{\sol}{\bw}^2 d^4\dstar$,  respectively.
Therefore we conclude for all $\sol\in\cF_\bw^\br$
\begin{equation*}
\Ex|\widehat\ell_{\kstar} - \ell(\sol)|^2\leqslant \numc\,\Bigl\{\br d^4\dstar+ \br\wnormV{\frep}{1/\bw}^2P( \Omega^c) 
+ d^3\, \eta \,\dstar \{\br d^5+\sigma^{2}\}\, \Bigl[1+ D\kappa^{-1}+ d^2 {D^5}{\kappa^{-5}}\Bigr]\Bigr\}.
\end{equation*}
Observe that $D$ and $\kappa$  depend on $\Tw$ and $\bw$ only and thus
by applying  Lemma \ref{lem:zusatz:Wkeit} we obtain
\begin{equation*}
\Ex|\widehat\ell_{\kstar} - \ell(\sol)|^2
\leqslant 
C(d,\Tw,\bw)\,\Bigl\{
 \br\dstar+ \br\,\eta\,\wnormV{\frep}{1/\bw}^2\,
 n^{-1}
+  
 \eta \, \{
\br
+\sigma^{2}\} \dstar
\Bigr\},
\end{equation*}
which completes the proof.\end{proof}
\noindent{\sc Proof of Proposition \ref{lem:applic:2}.}
Under the stated conditions  it is easy to verify that the assumptions of Theorem \ref{res:gen:upper} are satisfied.
The result follows  by applying  Theorem \ref{res:gen:upper}
 and 
Corollary \ref{coro:gen:upper} and we omit the details.\hfill $\square$ \par
\section{Technical assertions}
The following two lemmata gather technical results used in the proof of Proposition \ref{res:gen:prop:cons} and  Theorem \ref{res:gen:upper}. 
\begin{lem}\label{pr:gen:upper:l2}
Under Assumption \ref{ass:MOM} there exists a constant $C(k)>0$  such that
\begin{gather}\label{pr:gen:upper:l2:e1}
\sup_{z\in\R^m: z^tz=1} \Ex\Bigl|z^t [\op]_{\um}^{-1/2}[Z]_{\um}\Bigr|^{2k} \leqslant  
C(k) \, n^{-k}\,\Bigl(\sigma^2+\HnormV{\op^{1/2} (\sol-\sol_m)}^2
\Bigr)^k\, \eta,\\ \label{pr:gen:upper:l2:e2}
\Ex\enormV{[\op]_{\um}^{-1/2} [Z]_{\um}}^{2k}\leq C(k)\, \frac{m^{k}}{n^k}\, \,\Bigl(\sigma^{2}+\HnormV{\op^{1/2} (\sol-\sol_m)}^{2}\Bigr)^{k}\, \eta,\\ \label{pr:gen:upper:l2:e3}
\Ex\mnormV{[\Xi]_{\um}}^{2k} \leq C(k)\cdot\eta\cdot \frac{m^{2k}}{n^k}.
\end{gather}
\end{lem}
\begin{proof}[\noindent{\sc Proof.}]
Let us begin by deriving elementary bounds due to Assumption \ref{ass:MOM}.
For $m\geq 1$ define $U:=\sigma\epsilon+\HskalarV{\sol-\sol_m,X}$, where $\sigma^2_{U}=\Var(U)=\sigma^2+\HnormV{\op^{1/2} (\sol-\sol_m)}^2$. It is easily seen that for $k$ as given in  Assumption \ref{ass:MOM} and all $m\geq 1$ we have
\begin{multline}\label{eq:tec:lem:1} \Ex|U|^{4k}\leq C(k)\sigma_{U}^{4k}\eta,\qquad
\max_{1\leq j\leq m} \Ex|  ([\op]_{\um}^{-1/2}[X]_{\um})_j   |^{4k}\leq \eta \\\text{and }
 \sup_{z\in\R^m: z^tz=1} \Ex\Bigl|z^t [\op]_{\um}^{-1/2}[X]_{\um}\Bigr|^{4k} \leqslant \eta.
\end{multline}
 Let $z\in\R^m$ satisfy $z^tz=1$ and define  $U_i:=\sigma\epsilon_i+\HskalarV{\sol-\sol_m,X_i}$ then
$z^t [\op]_{\um}^{-1/2}[Z]_{\um}= \frac{1}{n}\sum_{i=1}^n U_iz^t[\op]_{\um}^{-1/2} [X_i]_{\um}$.
 Since $ \Ex \HskalarV{\sol-\sol_m,X}[X]_{\um}= [\op(\sol - \sol_m)]_{\um}= [g]_{\um}-[\op]_{\um}[\sol_m]_{\um}=0$, it follows that the random variables 
$U_iz^t[\op]_{\um}^{-1/2} [X_i]_{\um}$,  $i=1,\dots,n,$ are i.i.d. with mean zero. From Theorem 2.10 in \cite{Petrov1995} we conclude 
$\Ex |z^t [\op]_{\um}^{-1/2}[Z]_{\um}|^{2k} \leqslant C(k) n^{-k}\Ex |Uz^t[\op]_{\um}^{-1/2} [X]_{\um}|^{2k}$ for some  constant $C(k)>0$. 
Then  we claim that \eqref{pr:gen:upper:l2:e1} follows  from  the Cauchy-Schwarz inequality together with the bounds given in \eqref{eq:tec:lem:1}.
To deduce  \eqref{pr:gen:upper:l2:e2} from \eqref{pr:gen:upper:l2:e1} we use that
\begin{equation*}\Ex\enormV{ [\op]_{\um}^{-1/2}[Z]_{\um}}^{2k}
\leqslant  m^{k} \max_{1\leq j\leq m}\Ex\Bigl| ([\op]_{\um}^{-1/2}[Z]_{\um})_j\Bigr|^{2k} \leqslant m^k \sup_{z\in\R^m:z^tz=1} \Ex\Bigl| z^t [\op]_{\um}^{-1/2}[Z]_{\um}\Bigr|^{2k}.\end{equation*}
 Proof of \eqref{pr:gen:upper:l2:e3}. From the identity $n([\Xi]_{\um})_{j,l}= \sum_{i=1}^n \{ ([\op]_{\um}^{-1/2} [ X_i]_{\um})_j([\op]_{\um}^{-1/2}[
 X_i]_{\um})_{l}-\delta_{jl}\}$ with $\delta_{jl}= 1$ if $j=l$ and zero otherwise, we conclude  
$\Ex ([\Xi]_{\um})_{j,l}^{2k}\leq C(k)n^{-k}\eta$ by using  Theorem 2.10 in \cite{Petrov1995} and the second bound given in \eqref{eq:tec:lem:1}. 
Then using the elementary inequality
$\Ex\mnormV{ [\Xi]_{\um}}^{2k}\leq m^{2k} \max_{1\leq j,l\leq m} \Ex ([\Xi]_{\um})_{j,l}^{2k}$ 
 implies \eqref{pr:gen:upper:l2:e3}, which completes the proof.
\end{proof}
The next Lemma is partially shown in \cite{CardotJohannes2008}.
\begin{lem}\label{pr:gen:upper:l1}
Suppose the sequences $\bw$ and $\Tw$ satisfy Assumption \ref{ass:reg}.  Then we have for $T\in\cN_{\Tw}^\Td$
\begin{gather}\label{pr:gen:upper:l1:e0}
\sup_{m\geq 1}\Bigl\{ \Tw_m \mnormV{[T]_{\um}^{-1}}\Bigr\}\leqslant \{2\Td^2(2\Td^4+3)\}^{1/2}\leqslant 4\Td^3,\\
\label{pr:gen:upper:l1:e1}
\sup_{m\geq 1}\mnormV{[T]_{\um}^{-1/2}[\Diag(\Tw)]^{1/2}_{\um}}^2\leqslant \{2\Td^2(2\Td^4+3)\}^{1/2}\leqslant 4\Td^3.
\end{gather}
If in addition  $\sol_m$  denotes a Galerkin solution of $g=T\sol$ then
\begin{gather} \label{pr:gen:upper:l1:e3a}
\sup_{m\geq1}\Bigl\{\sup_{\HnormV{\beta}=1}\HnormV{\Pi_m\sol-\sol_m}^2\Bigr\}\leqslant 2(1+\Td^2), \\\label{pr:gen:upper:l1:e3b}
\sup_{m\geq1}\Bigl\{\sup_{\wnormV{\beta}{\bw}=1}\wnormV{\Pi_m\sol-\sol_m}{\bw}^2\Bigr\}\leqslant 2(1+\Td^2),
\end{gather}
and if  $\sol\in \cF_\bw^\br$ is  additionally satisfied then 
\begin{gather} 
\label{pr:gen:upper:l1:e3}
\sup_{m\geq1}\{\Tw_m^{-1}\bw_m\,\HnormV{\op^{1/2}(\sol-\sol_m)}^2\}\leqslant 10 \Td^5 \br.
\end{gather}
Furthermore, for all $m\geq1$ and all $\frep\in \cFF_{1/\bw}$  we have
\begin{gather} \label{pr:gen:upper:l1:e4}
|\frep({\sol-\sol_m})|^2\leqslant
2\wnormV{\sol}{\bw}^2\{\sum_{j>m}[\frep]_j^2\bw_j^{-1}+ 2(1+d^4)\frac{\Tw_m}{\bw_m}\sum_{j=1}^m[\frep]_j^2\Tw_j^{-1}\}.
\end{gather}
\end{lem}
\begin{proof}[{\sc Proof.}] The estimates \eqref{pr:gen:upper:l1:e0} - \eqref{pr:gen:upper:l1:e1} are given in Lemma A.3 in \cite{CardotJohannes2008}. Furthermore,  from (A.19) and (A.20) in Lemma A.3 in \cite{CardotJohannes2008} follow \eqref{pr:gen:upper:l1:e3a} and \eqref{pr:gen:upper:l1:e3b}.
We start our proof of \eqref{pr:gen:upper:l1:e3} with the observation that   the link condition $T\in\cN_{\Tw}^\Td$ implies that $T$ is strictly positive and that for all $|s|\leqslant 1$  by using the inequality of \cite{Heinz51}
\begin{equation}\label{pr:gen:upper:l1:Heinz}
\Td^{-2|s|} \wnormV{f}{\Tw^{2s}}^2 \leqslant \HnormV{T^sf}^2\leqslant \Td^{2|s|} \wnormV{f}{\Tw^{2s}}^2 .
\end{equation}
Thus, by using successively  the first inequality of \eqref{pr:gen:upper:l1:Heinz},  the Galerkin condition \eqref{app:unknown:Galerkin:1:2} and  the second inequality of \eqref{pr:gen:upper:l1:Heinz}, we obtain
\begin{equation}\label{pr:gen:upper:l1:e1:0}
\wnormV{\sol-\sol_m}{\Tw^2}^2\leqslant \Td^2 \HnormV{T(\sol-\sol_m)}^2\leqslant \Td^2 \HnormV{T(\sol-\Pi_m\sol)}^2\leqslant \Td^4 \wnormV{\sol-\Pi_m\sol}{\Tw^2}^2
\end{equation}
Since $\sol\in \cF_\bw^\br$ and $\bw^{-1}\Tw^2$ is monotonically decreasing we have $\wnormV{\sol-\Pi_m\sol}{\Tw^2}^2\leqslant  \bw_m^{-1}\Tw_m^2
\wnormV{\sol}{\bw}^2$, which together with \eqref{pr:gen:upper:l1:e1:0} implies $\wnormV{\sol-\sol_m}{\Tw^2}^2\leqslant  \Td^4 \bw_m^{-1}\Tw_m^2 \wnormV{\sol}{\bw}^2$ 
and hence,
\begin{equation}\label{pr:gen:upper:l1:e3:1}
\wnormV{\Pi_m\sol-\sol_m}{\Tw^2}^2\leqslant 2\{ \wnormV{\sol-\sol_m}{\Tw^2}^2 + \wnormV{\sol-\Pi_m\sol}{\Tw^2}^2\}\leqslant
2(1+\Td^4) \bw_m^{-1}\Tw_m^2 \wnormV{\sol}{\bw}^2.
\end{equation}
The last  estimate and  the second inequality of \eqref{pr:gen:upper:l1:Heinz} imply further  $\HnormV{\op^{1/2}(\Pi_m\sol-\sol_m)}^2\leq d\wnormV{\Pi_m\sol-\sol_m}{\Tw}^2\leq \Td
\Tw_m^{-1}\wnormV{\Pi_m\sol-\sol_m}{\Tw^2}^2\leq 2\Td(1+\Td^4) \bw_m^{-1}\Tw_m \wnormV{\sol}{\bw}^2$ because $\Tw$ is monotonically non increasing. Taking into account
$\wnormV{\sol-\Pi_m\sol}{\Tw}^2\leqslant  \bw_m^{-1}\Tw_m\wnormV{\sol}{\bw}^2$ we obtain \eqref{pr:gen:upper:l1:e3}. Finally, by applying the Cauchy-Schwarz inequality we have on
the one hand $|\frep({\sol-\Pi_m\sol})|^2\leqslant \wnormV{\sol}{\bw}^2\sum_{j>m}[\frep]_j^2\bw_j^{-1}$ and by using \eqref{pr:gen:upper:l1:e3:1} it follows on the other hand
$|\frep(\Pi_m\sol-\sol_m)|^2\leqslant \wnormV{\Pi_m\sol-\sol_m}{\Tw}^2\sum_{j=1}^m[\frep]_j^2\Tw_j^{-1}\leq  2(1+\Td^4)\wnormV{\sol}{\bw}^2 \bw_m^{-1}\Tw_m\sum_{j=1}^m[\frep]_j^2\Tw_j^{-1}$.
Combining both estimates implies now \eqref{pr:gen:upper:l1:e4}, which completes the
proof.\end{proof}

\section*{Acknowledgements}
This work was supported by the IAP research network no.\ P6/03 of the
Belgian Government (Belgian Science Policy) and by the ``Fonds
Sp\'eciaux de Recherche'' from the Universit\'e catholique de Louvain.

\bibliography{JJRS}
\end{document}